% !TEX spellcheck = en-US
% !TEX TS-program = latex

\documentclass[titlepage,12pt]{article} 
\usepackage{hyperref}
\usepackage[usenames,dvipsnames]{xcolor}
\usepackage[title]{appendix}
\usepackage{amssymb,amsthm,amsmath} 
\usepackage[a4paper]{geometry}
\usepackage[italian,english]{babel}
\usepackage{datetime2}
\usepackage{enumitem}
\usepackage[utf8]{inputenc}

\selectlanguage{english}

%%%%%%%%%%%%%%%%%%%%%%%%%%%%%%%%%%
%                                %
%   Dimensioni pagina standard   %
%                                %
%%%%%%%%%%%%%%%%%%%%%%%%%%%%%%%%%%

\geometry{text={15.7 cm, 22.4 cm},centering,includefoot}

%%%%%%%%%%%%%%%%%%%%%%%%%%%%%%%%%%%%%%%%%%%%%%%
%                                             %
%   Versione Finale: alla fine resta questo   %
%                                             %
%%%%%%%%%%%%%%%%%%%%%%%%%%%%%%%%%%%%%%%%%%%%%%%

\date{}

%%%%%%%%%%%%%%%%%%%%%%%%%%%%%%%%%%%%%%%%%%%%%%%%%%%%%%%%%%
%                                                        %
%   Versione di lavoro: commentare per versione finale   %
%                                                        %
%%%%%%%%%%%%%%%%%%%%%%%%%%%%%%%%%%%%%%%%%%%%%%%%%%%%%%%%%%

%\makeatletter
%\renewcommand{\@oddfoot}{\hfill\fbox{\jobname\ -- Release \NumeroVersione\ (\DTMnow) -- Page  \arabic{page} of \pageref{NumeroPagine}}\hfill}
%\makeatother
%
%%\makeatletter
%%\renewcommand{\@oddhead}{\hfill 
%%\fcolorbox{red}{yellow}{Work in progress -- Not ready to be read}
%%%\fcolorbox{olive}{green}{Working  meta-stable version.}
%%\hfill}
%%\makeatother
%
%\def\NumeroVersione{1}
%
%\date{Release \NumeroVersione\ (\DTMnow)}

%%%%%%%%%%%%%%%%%%%%%%%%%%%%%%%%%%%%%%%%%%%%%%%%%%%%%%%%%%%%%%%%%
%                                                               %
%   Versione di lavoro: commentare quando si avvicina la fine   %
%                                                               %
%%%%%%%%%%%%%%%%%%%%%%%%%%%%%%%%%%%%%%%%%%%%%%%%%%%%%%%%%%%%%%%%%

%\usepackage{showlabels}
%%\usepackage{refcheck}

%\oddsidemargin -1cm

%%%%%%%%%%%%%%%%%%%%%%%%%%%
%                         %
%   Definizioni e macro   %
%                         %
%%%%%%%%%%%%%%%%%%%%%%%%%%%

\newcommand{\ep}{\varepsilon}
\newcommand{\re}{\mathbb{R}}

\newcommand{\n}{\mathbb{N}}

\newcommand{\KUB}{\operatorname{OB}_{\mathrm{U}}}
\newcommand{\KGB}{\operatorname{OB}_{\mathrm{G}}}
\newcommand{\KPB}{\operatorname{OB}_{\mathrm{P}}}
\newcommand{\X}{\mathbb{X}}
\newcommand{\Y}{\mathbb{Y}}
\newcommand{\A}{\mathbb{A}}

%%%%%%%%%%%%%%%%%%%%%%%
%                     %
%   Tipi di teorema   %
%                     %
%%%%%%%%%%%%%%%%%%%%%%%

\newtheorem{thm}{Theorem}[section]
\newtheorem{thmbibl}{Theorem}

\newtheorem{rmk}[thm]{Remark}

\newtheorem{defn}[thm]{Definition}
\newtheorem{cor}[thm]{Corollary}

\newtheorem{lemma}[thm]{Lemma}
\newtheorem{open}{Open problem}

%%%%%%%%%%%%%%%%%%%%%%%%
%                      %
%   Titolo ed autori   %
%                      %
%%%%%%%%%%%%%%%%%%%%%%%%
 
\title{Sharp ultimate velocity bounds for the general solution of some linear second order evolution equation with damping and bounded forcing}

\author{Marina Ghisi\vspace{1ex}\\ 
{\normalsize Università degli Studi di Pisa} \\
{\normalsize Dipartimento di Matematica}\\ 
{\normalsize PISA (Italy)}\\
{\normalsize e-mail: \texttt{marina.ghisi@unipi.it}}
\and
Chiara Giraudo\vspace{1ex}\\
{\normalsize Università degli Studi di Trento} \\ 
{\normalsize Dipartimento di Matematica}\\ 
{\normalsize Trento (Italy)}\\
{\normalsize e-mail: \texttt{chiaragiraudo96@gmail.com}}
\and
Massimo Gobbino\vspace{1ex}\\ 
{\normalsize Università degli Studi di Pisa} \\
{\normalsize Dipartimento di Ingegneria Civile e Industriale}\\ 
{\normalsize PISA (Italy)}\\  
{\normalsize e-mail: \texttt{massimo.gobbino@unipi.it}}
\and
Alain Haraux\vspace{1ex}\\ 
{\normalsize Sorbonne Université, Université Paris-Diderot SPC, CNRS, INRIA}, \\
{\normalsize Laboratoire Jacques-Louis Lions,  LJLL, F-75005,
Paris, France.}\\ 
{\normalsize e-mail: \texttt{haraux@ann.jussieu.fr}}}

%%%%%%%%%%%%%%%%%%%%%%%%%%%%%%%%%%%%%%%%%%%%
%                                          %
%   Abstract, classificazione, key words   %
%                                          %
%%%%%%%%%%%%%%%%%%%%%%%%%%%%%%%%%%%%%%%%%%%%

\begin{document}
\maketitle

\begin{abstract}

We consider a class of linear second order differential equations with damping and external force. We investigate the link between a uniform bound on the forcing term and the corresponding ultimate bound on the velocity of solutions, and we study the dependence of that  bound on the damping and on the ``elastic force''.

We prove three results. First of all, in a rather general setting we show that different notions of bound are actually equivalent. Then we compute the optimal constants in the scalar case. Finally, we extend the results of the scalar case to abstract dissipative wave-type equations in Hilbert spaces. In that setting we obtain rather sharp estimates that are quite different from the scalar case, in both finite and infinite dimensional frameworks.

The abstract theory applies, in particular, to dissipative wave, plate and beam equations.
\vspace{6ex}

\noindent{\bf Mathematics Subject Classification 2010 (MSC2010):} 
35B40, 34D05, 34C11, 35L90.

% 35B40   	Asymptotic behavior of solutions
% 35B36   	Pattern formation
% 35L70   	Nonlinear second-order hyperbolic equations
% 35Q74   	PDEs in connection with mechanics of deformable solids
% 35L75   	Nonlinear higher-order hyperbolic equations
% 35L35   	Initial-boundary value problems for higher-order hyperbolic equations
% 35L90   	Abstract hyperbolic equations
% 34A30   	Linear ordinary differential equations and systems, general		
% 34C11   	Growth and boundedness of solutions to ordinary differential equations
% 34D05   	Asymptotic properties of solutions to ordinary differential equations

\vspace{6ex}

\noindent{\bf Key words:} linear equation, second order differential equation, dissipative equation, forcing term, asymptotic behavior of solutions, bounded solutions, ultimate bound.

\end{abstract}

\clearpage
 
\section{Introduction}

The present paper deals with optimal estimates for the ultimate bound of solutions to a class of evolution problems with bounded source term. 

Our starting point is a paper by W.~S.~Loud~\cite{loud} concerning the second order ordinary differential equation
\begin{equation}
u''(t)+cu'(t)+g(u(t))=f(t),
\label{eqn:loud}
\end{equation}
which has been the object of many researches since the pioneering papers of G.~Duffing  who considered the case where $g$ is a polynomial of degree 3.  Many texts have been written on this special case even recently, see for example~\cite{Bren-Kovac}. Vector generalizations of this equation in both finite and infinite dimensional settings appear naturally, with or without forcing terms, in the context of stability theory for beams, cranes and more recently suspension bridges, compare for example~\cite{Gazzola2, Garrione-Gazzola,Gazzola1}. Before stating our present results, we shall recall now only the researches dealing with the linear or ``single well" nonlinear cases.

\paragraph{\textmd{\textit{Previous results}}}

In~\cite{loud} the following asymptotic bounds are proved.

\begin{thmbibl}[{see~\cite[Theorem~1]{loud}}]\label{thmbibl:loud}

Let us assume that
\begin{itemize}

\item $c$ is a positive real number,

\item  $g:\re\to\re$ is a function of class $C^{1}$ such that $g(0)=0$ and
\begin{equation}
g'(s)\geq b
\qquad
\forall s\in\re
\label{hp:loud}
\end{equation}
for some positive real number $b$,

\item  $f\in L^{\infty}((0,+\infty),\re)$ is a bounded forcing term.

\end{itemize}

Then every solution to (\ref{eqn:loud}) satisfies
\begin{equation}
\limsup_{t\to +\infty}|u(t)|\leq
\min\left\{\frac{1}{b}+\frac{4}{c^{2}},\frac{1}{b}+\frac{4}{c\sqrt{b}}\right\}\|f\|_{L^{\infty}((0,+\infty),\re)},
\label{th:loud-u}
\end{equation}
and
\begin{equation}
\limsup_{t\to +\infty}|u'(t)|\leq
\frac{4}{c}\|f\|_{L^{\infty}((0,+\infty),\re)}.
\label{th:loud-u'}
\end{equation}

\end{thmbibl}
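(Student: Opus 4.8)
The plan is to prove the three conclusions in order: first an a priori ultimate bound on the pair $(u,u')$, which at the same time yields global existence; then the velocity estimate \eqref{th:loud-u'}; and finally the position estimate \eqref{th:loud-u}. Write $M:=\|f\|_{L^{\infty}((0,+\infty),\re)}$ and $G(s):=\int_0^s g(\sigma)\,d\sigma$. From $g(0)=0$ and $g'\ge b$ one gets at once $|g(s)|\ge b|s|$, $s\,g(s)\ge bs^2$ and $\tfrac b2 s^2\le G(s)\le s\,g(s)$ for every $s\in\re$ (the upper bound because $g$ is nondecreasing). Two identities will be used throughout: multiplying \eqref{eqn:loud} by $u'$ gives the energy balance $\tfrac{d}{dt}\bigl(\tfrac12 u'^2+G(u)\bigr)=-c\,u'^2+f u'$, and rewriting \eqref{eqn:loud} as $(e^{ct}u')'=e^{ct}\bigl(f-g(u)\bigr)$ turns the first–order part into an exact derivative; also, at any instant where $u'$ has a critical point one has $c\,u'=f-g(u)$ there, since $u''=0$.

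For the a priori bound I would use the perturbed energy $\Phi_{\varepsilon}:=\tfrac12 u'^2+G(u)+\varepsilon\,uu'$ with $0<\varepsilon<\min\{1,b,c\}$. Multiplying \eqref{eqn:loud} also by $u$ and combining, one finds $\Phi_{\varepsilon}'=-(c-\varepsilon)u'^2-\varepsilon\,u\,g(u)+f(u'+\varepsilon u)\le-\tfrac{c-\varepsilon}{2}u'^2-\tfrac{\varepsilon b}{2}u^2+C_{\varepsilon}$ for a constant $C_{\varepsilon}=C_{\varepsilon}(M,b,c)$. Since $\Phi_{\varepsilon}$ is nonnegative and bounded below by a positive multiple of $u'^2+u^2$, a standard absorbing–set argument shows that every maximal solution is global and that $\limsup_{t\to+\infty}\bigl(|u(t)|+|u'(t)|\bigr)$ is finite; in particular $u'$ is eventually uniformly continuous, which legitimises all the $\limsup$'s below.

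The velocity estimate is the heart of the matter. If $u'$ has a constant sign for large $t$, then $u$ is eventually monotone and bounded, hence convergent, so $\int^{+\infty}|u'|<+\infty$, and uniform continuity of $u'$ forces $u'(t)\to0$; so we may assume $u'$ changes sign infinitely often. Fix a large maximal interval $[\alpha,\beta]$ on which $u'\ge0$ (the case $u'\le0$ is symmetric): then $u'(\alpha)=u'(\beta)=0$, $u$ is nondecreasing on $[\alpha,\beta]$, $\alpha$ is a local minimum and $\beta$ a local maximum of $u$, and evaluating \eqref{eqn:loud} at these points gives $g(u(\alpha))\le M$ and $g(u(\beta))\ge-M$. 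If $t^{\ast}\in[\alpha,\beta]$ realises the maximum of $u'$ on the interval, then $c\,u'(t^{\ast})=f(t^{\ast})-g(u(t^{\ast}))$, while integrating $(e^{ct}u')'=e^{ct}(f-g(u))$ over $[\alpha,t^{\ast}]$ gives $e^{ct^{\ast}}u'(t^{\ast})=\int_{\alpha}^{t^{\ast}}e^{ct}\bigl(f(t)-g(u(t))\bigr)\,dt$. The task is then to estimate this exponentially weighted integral sharply: $g(u(t))$ is nondecreasing on $[\alpha,t^{\ast}]$ and pinched between $g(u(\alpha))$ and $g(u(t^{\ast}))$, and one must combine this monotonicity with the analogous information on the adjacent half–oscillations of $u$ — in particular keeping track of where $g(u(t))$ is negative versus positive, so that the large values of $f-g(u)$ occur precisely where the exponential weight is smallest — in order to bring the bound down to $u'(t^{\ast})\le\tfrac4c M+o(1)$. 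This weighted bookkeeping over consecutive half–oscillations, rather than any single coarse estimate, is the step I expect to be the main obstacle, and the constant $4$ is exactly what it produces.

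Finally, for the position estimate one works at a large $t_0$ with $|u(t_0)|$ close to $\limsup|u|$; one may take $t_0$ to be a local extremum of $u$, so that $u'(t_0)=0$ and $b|u(t_0)|\le|g(u(t_0))|=|f(t_0)-u''(t_0)|$, reducing everything to controlling $|u''(t_0)|=|f(t_0)-g(u(t_0))|$, i.e.\ the ``overshoot'' of $g(u)$ past $\pm M$. Estimating this overshoot by means of the velocity bound already obtained, via the energy balance integrated over a half–oscillation of $u$, gives $\limsup|u|\le(\tfrac1b+\tfrac4{c^{2}})M$: the term $\tfrac1b$ is the contribution of inverting $g$ and the term $\tfrac4{c^{2}}$ that of the velocity–acceleration part. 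Balancing the same two contributions differently — estimating the acceleration term against $\sqrt b$ times a displacement instead of against $c$ times a velocity — produces the alternative bound $(\tfrac1b+\tfrac4{c\sqrt b})M$, and taking the smaller of the two yields \eqref{th:loud-u}, completing the proof.
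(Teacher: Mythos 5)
This statement is Loud's theorem, which the paper quotes from the literature (\cite{loud}) without reproving it; the paper explicitly notes that Loud's proof ``relies on a delicate geometric argument in the phase space'' and that it is by no means evident how to recover it analytically. So there is no ``paper's own proof'' to compare against, and the question is simply whether your sketch would stand on its own. It does not: the setup is sound, but the two steps that actually produce the constants are left as announcements rather than arguments.

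Your preliminary observations are all correct (the inequalities for $g$ and $G$, the energy identity, the integrating factor $(e^{ct}u')' = e^{ct}(f-g(u))$, and the identity $cu' = f-g(u)$ at critical points of $u'$; the perturbed-energy computation is missing a $-\varepsilon c u u'$ cross term but that is harmless for the qualitative absorbing-set conclusion). The real work, however, is entirely contained in the sentence ``This weighted bookkeeping over consecutive half--oscillations \ldots is the step I expect to be the main obstacle, and the constant $4$ is exactly what it produces.'' That is not a proof; it is a statement of intent. On the interval $[\alpha,t^{\ast}]$ you only know $g(u(\alpha))\le M$ and that $g(u)$ is nondecreasing, but you have \emph{no} lower bound on $g(u(\alpha))$ and no upper bound on $g(u(t^{\ast}))$ at this stage, and those are precisely what control the size of $\int_\alpha^{t^{\ast}}e^{ct}(f-g(u))\,dt$. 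Extracting a clean factor of $4/c$ from this requires a genuine new idea, not bookkeeping. Indeed, the paper under review exists in large part because the analytic (energy-inequality) route in \cite{2009-JMPA-FitHar,2013-DCDS-FitHar} \emph{failed} to recover the $4/c$ bound for large $c$, and even the present authors only reach $4/c$ in the regime $c^2\le 4b$ and pay a dimensional or logarithmic penalty otherwise. So the burden of proof on your ``weighted bookkeeping'' claim is heavy, and you have not discharged it.

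The same criticism applies to the position bound. You reduce it to controlling $|u''(t_0)|$ at an extremum, and then assert that ``the energy balance integrated over a half-oscillation'' gives $|u''(t_0)|\le (4b/c^{2})M$ and, ``balancing the same two contributions differently,'' also $(4\sqrt b/c)M$. Neither computation is shown, and the second claim in particular (``estimating the acceleration term against $\sqrt b$ times a displacement instead of against $c$ times a velocity'') is too vague to evaluate. In short: the scaffolding is correct and the choice of tools (integrating factor, sign-change intervals, local-extremum identities) is a reasonable way to approach Loud's theorem, but the two quantitative lemmas that would turn this into a proof — the $4/c$ estimate at the maximum of $u'$, and the overshoot estimate for $u''$ at extrema of $u$ — are exactly the parts you skipped. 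If you want to complete this, you should either reconstruct Loud's phase-plane argument (which tracks trajectories against explicit bounding curves in the $(u,u')$ plane) or produce, in full, the estimate of $\int_\alpha^{t^{\ast}}e^{ct}(f-g(u))\,dt$ that you claim gives $4/c$.
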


Both estimates can be refined by replacing the $L^{\infty}$ norm of $f$ by the limsup at infinity of $f$, intended (as we are dealing with functions defined up to a negligible set) as
\begin{equation}
\limsup_{t\to +\infty}|f(t)|:=\lim_{T\to +\infty}\|f\|_{L^{\infty}((T,+\infty),\re)}.
%\label{defn:limsup}
\nonumber
\end{equation} 

We also observe that, at least in the linear case where $g(s)=bs$, the left-hand sides of (\ref{th:loud-u}) and (\ref{th:loud-u'}) do not depend on the solution $u(t)$, but just on the forcing term $f(t)$. This follows from the decay to zero of solutions to the corresponding homogeneous equation.

The proof of (\ref{th:loud-u}) and (\ref{th:loud-u'}) provided in~\cite{loud} relies on a delicate geometric argument in the phase space, and it is by no means evident how this argument could be extended to vector equations, and even less to infinite dimensions. In order to overcome this difficulty, more than 50 years later the authors of~\cite{2009-JMPA-FitHar,2013-DCDS-FitHar} tried to recover the same estimates by a purely analytical method based on differential inequalities (energy estimates), obtaining the following result.
\begin{thmbibl}[{see~\cite[Theorem~1.1, Theorem~2.1, and Proposition~2.4]{2013-DCDS-FitHar}}]\label{thmbibl:alain-ODE}

Let us consider equation (\ref{eqn:loud}) under the same assumptions of Theorem~\ref{thmbibl:loud}.

Then every solution satisfies
\begin{equation}
\limsup_{t\to +\infty}|u(t)|\leq
\max\left\{\frac{1}{b},\frac{2}{c\sqrt{b}}\right\}\limsup_{t\to +\infty}|f(t)|,
\label{th:alain-ode-u}
\end{equation}
and
\begin{equation}
\limsup_{t\to +\infty}|u'(t)|\leq
\left\{
\begin{array}{l@{\qquad}l}
\displaystyle\left(\dfrac{2}{c}+\dfrac{1}{\sqrt{b}}\right)\limsup_{t\to +\infty}|f(t)| & 
\mbox{if }c<2\sqrt{b}, \\[3ex]
\displaystyle\dfrac{2}{\sqrt{b}}\limsup_{t\to +\infty}|f(t)| & 
\mbox{if }c\geq 2\sqrt{b}.
\end{array}
\right.
\label{th:alain-ode-u'}
\end{equation}

\end{thmbibl}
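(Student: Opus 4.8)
The plan is to establish both bounds by energy-type (differential inequality) arguments, distinguishing the overdamped case $c\geq2\sqrt b$ from the underdamped one $c<2\sqrt b$. First I would normalize: since $g'\geq b$, write $g(s)=bs+h(s)$ with $h(0)=0$ and $h'=g'-b\geq0$, so that $h$ is nondecreasing, $sh(s)\geq0$ and $H(s):=\int_0^sh(\sigma)\,d\sigma$ satisfies $0\leq H(s)\leq sh(s)$; the equation becomes $u''+cu'+bu+h(u)=f$. Set $M:=\limsup_{t\to+\infty}|f(t)|$ and fix $\varepsilon>0$; since all conclusions are in the limit $t\to+\infty$ I may assume $|f(t)|\leq M+\varepsilon$ for all $t$ and let $\varepsilon\to0$ at the end. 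A preliminary step is ultimate boundedness: with the modified energy $\mathcal E:=\tfrac12(u')^2+\tfrac b2u^2+H(u)+\mu uu'$ ($\mu>0$ small), multiplying the equation by $u'$ and by $u$ and using $uh(u)\geq H(u)\geq0$, one gets $\mathcal E'\leq-2\kappa\mathcal E+C(M+\varepsilon)^2$ for suitable $\kappa,C>0$; hence $R_1:=\limsup|u|$ and $R_2:=\limsup|u'|$ are finite, and all quantities appearing below are finite.

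For the underdamped case set $\omega:=\tfrac12\sqrt{4b-c^2}>0$ and introduce the complex amplitude $\zeta:=u'+(\tfrac c2+i\omega)u$; a direct computation gives $\zeta'=\lambda\zeta+(f-h(u))$ with $\lambda=-\tfrac c2+i\omega$, whence $\tfrac{d}{dt}|\zeta|^2=-c|\zeta|^2+2\,\mathrm{Re}(\bar\zeta)(f-h(u))$ and $\mathrm{Re}(\bar\zeta)=u'+\tfrac c2u$. In the linear case $h\equiv0$ this reads $\tfrac{d}{dt}|\zeta|\leq-\tfrac c2|\zeta|+(M+\varepsilon)$ (divide by $2|\zeta|$), so $\limsup|\zeta|\leq\tfrac2c(M+\varepsilon)$; in general the term $h(u)$, which has no a priori size bound, must be absorbed using $u'h(u)=\tfrac{d}{dt}H(u)$ and $uh(u)\geq H(u)\geq0$ — the right object being $|\zeta|^2+2H(u)$ — and a careful bookkeeping preserves the constant $\tfrac2c$. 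The key point is then that $|\zeta|^2=(u'+\tfrac c2u)^2+\omega^2u^2$, so that at any instant with $u'=0$ one has exactly $|\zeta|^2=bu^2$; choosing $t_n\to+\infty$ with $u(t_n)^2\to R_1^2$ and $u'(t_n)\to0$ (possible because $u^2$ is bounded with bounded derivative, hence has near-stationary points arbitrarily close to its $\limsup$) gives $bR_1^2=\lim|\zeta(t_n)|^2\leq\limsup|\zeta|^2\leq\tfrac4{c^2}(M+\varepsilon)^2$, i.e. $R_1\leq\tfrac2{c\sqrt b}(M+\varepsilon)$. Finally $u'=\mathrm{Re}(\zeta)-\tfrac c2u$ yields $\limsup|u'|\leq\limsup|\zeta|+\tfrac c2R_1\leq\bigl(\tfrac2c+\tfrac1{\sqrt b}\bigr)(M+\varepsilon)$, and letting $\varepsilon\to0$ gives (\ref{th:alain-ode-u}) and the first line of (\ref{th:alain-ode-u'}) in this regime.

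For the overdamped case, $\lambda^2+c\lambda+b=(\lambda+\alpha)(\lambda+\beta)$ with $0<\alpha\leq\sqrt b\leq\beta$, $\alpha+\beta=c$, $\alpha\beta=b$, and $(D+\alpha)(D+\beta)u=f-h(u)$. The Green function of $(D+\alpha)(D+\beta)$ is $G(\tau)=(e^{-\alpha\tau}-e^{-\beta\tau})/(\beta-\alpha)\geq0$ (and $G(\tau)=\tau e^{-\alpha\tau}$ if $\alpha=\beta$), with $\|G\|_{L^1}=1/(\alpha\beta)=1/b$, while an explicit computation of the single sign change $\tau^*=\log(\beta/\alpha)/(\beta-\alpha)$ of $G'$ gives $\|G'\|_{L^1}=2G(\tau^*)\leq2/\sqrt b$; since, up to an exponentially small error, $u=G*(f-h(u))$ and $u'=G'*(f-h(u))$, one obtains $\limsup|u|\leq\tfrac1b M$ and $\limsup|u'|\leq\tfrac2{\sqrt b}M$ — provided the contribution of $h(u)$, again not bounded in size, is shown not to worsen the estimate, which is done through the sign of $h$ (for the position bound one may alternatively use a barrier argument: on $\{u>(M+\varepsilon)/b\}$ one has $g(u)>M+\varepsilon\geq f$, hence $u''+cu'<0$ there, which prevents overshooting). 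This gives (\ref{th:alain-ode-u}) and the second line of (\ref{th:alain-ode-u'}) for $c\geq2\sqrt b$; note that both formulas match continuously at $c=2\sqrt b$.

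The main obstacle throughout is the nonlinearity: because $g$ is only bounded below, $h(u)$ cannot be controlled in absolute value, so every estimate must be organized so that $h(u)$ enters solely through the signs $sh(s)\geq0$ and $sh(s)\geq H(s)$. This is precisely what forces the use of $|\zeta|^2+2H(u)$ in place of $|\zeta|^2$ and of the monotonicity/barrier arguments in the overdamped regime, and it is the delicate point one must handle exactly right in order to recover the sharp constants $2/(c\sqrt b)$, $2/c+1/\sqrt b$ and $2/\sqrt b$ rather than merely comparable ones; by contrast the preliminary boundedness, the passage $\varepsilon\to0$, and the dichotomy $c\lessgtr2\sqrt b$ (complex versus real characteristic roots) are routine.
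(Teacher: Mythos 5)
Your underdamped argument via the complex amplitude $\zeta = u' + \bigl(\tfrac c2 + i\omega\bigr)u$ is correct and is a genuinely different route from the differential-inequality (Lyapunov functional) proof in Fitouri--Haraux, which is what the present paper cites (the paper reproves nothing here, it only quotes the result). The bookkeeping does close: one finds
$\tfrac{d}{dt}\bigl(|\zeta|^2 + 2H(u)\bigr) = -c|\zeta|^2 - c\,u\,h(u) + 2\bigl(u'+\tfrac c2u\bigr)f$, and since $uh(u)\geq H(u)\geq 0$ this yields $\limsup\bigl(|\zeta|^2+2H(u)\bigr)\leq 4(M+\varepsilon)^2/c^2$, after which the observation $|\zeta|^2 = bu^2$ at instants with $u'=0$ recovers the sharp factor $b$ instead of the cruder $\omega^2$, and the stated velocity bound follows from $u'=\Re\zeta-\tfrac c2 u$. (One caveat: the sequence $t_n$ with $u(t_n)^2\to R_1^2$, $u'(t_n)\to 0$ is obtained correctly only after noting that $(u^2)''$ is bounded too, not merely $(u^2)'$ as you state; that is available once preliminary ultimate boundedness of $u,u',u''$ has been established, but your parenthetical justification is not quite enough on its own.)

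The overdamped case has a genuine gap, precisely at the point you flag and then wave away. In $u=G*(f-h(u))$ the term $h(u)$ is not bounded by $M$, and the ``sign of $h$'' does not close the estimate: $u$ may change sign, so $\int_0^\infty G(\tau)\bigl(-h(u(t-\tau))\bigr)\,d\tau$ need not be nonpositive, and the crude monotonicity bound only gives the coupled system $R_1\leq (M-h(m))/b$, $-m\leq (M+h(R_1))/b$ (with $m:=\liminf u$), which does not self-improve to $R_1\leq M/b$ for a steep, asymmetric $h$. Your alternative barrier argument is also insufficient: at a local \emph{maximum} of $u$ one has $u'=0$, $u''\leq 0$, hence $g(u)=f-u''\geq f$, which is the wrong-way inequality --- it is positive local \emph{minima} that are forced below $(M+\varepsilon)/b$. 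What the barrier does give is that $e^{ct}u'$ decreases on $\{u>(M+\varepsilon)/b\}$, so that the height of each excursion above the threshold is at most $(M+\varepsilon)/b + u'(\text{entry time})/c$; this only yields $R_1\leq (M+\varepsilon)/b + R_2/c$, and even with the optimal $R_2\leq 2(M+\varepsilon)/\sqrt b$ this is $2(M+\varepsilon)/b$ at the borderline $c=2\sqrt b$, not $(M+\varepsilon)/b$. To obtain the claimed constant $1/b$ (and the companion velocity bound $2/\sqrt b$ in the nonlinear overdamped case) you need a genuine energy functional in the spirit of the cited source, or a correct nonlinear comparison argument; neither is supplied, and the two devices you invoke do not do the job.
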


As for estimates on $u(t)$, it is not difficult to check that (\ref{th:alain-ode-u}) improves (\ref{th:loud-u}) for all positive values of $b$ and $c$. The best constant for which an ultimate bound of the form (\ref{th:alain-ode-u}) is true was computed in~\cite{2005-AccXL-Haraux} in the case of the linear equation 
\begin{equation}
u''(t)+cu'(t)+bu(t)=f(t).
\label{eqn:scalar}
\end{equation}

The result is the following (see also~\cite{giraudo:tesi}).
\begin{thmbibl}[{see~\cite[Theorem~2.1]{2005-AccXL-Haraux}}]\label{thmbibl:alain-LODE}

Let $b$ and $c$ be positive real numbers.

Then every solution to (\ref{eqn:scalar}) satisfies
\begin{equation}
\limsup_{t\to +\infty}|u(t)|\leq
\left\{
\begin{array}{l@{\qquad}l}
\displaystyle\dfrac{1}{b}\coth\left(\frac{c\pi}{2\sqrt{4b-c^{2}}}\right)\cdot\limsup_{t\to +\infty}|f(t)| & 
\mbox{if }c<2\sqrt{b}, \\[3ex]
\displaystyle\dfrac{1}{b}\limsup_{t\to +\infty}|f(t)| & 
\mbox{if }c\geq 2\sqrt{b},
\end{array}
\right.
\label{th:alain-lode-u}
\end{equation}
and this estimate is sharp for all positive values of $b$ and $c$.

\end{thmbibl}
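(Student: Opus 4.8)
The plan is to reduce the statement to the computation of a single integral: the $L^{1}$-norm on $(0,+\infty)$ of the impulse response of~(\ref{eqn:scalar}). Let $k$ be the solution of $k''+ck'+bk=0$ with $k(0)=0$, $k'(0)=1$. By Duhamel's formula every solution $u$ of~(\ref{eqn:scalar}) satisfies, for $t\ge T$,
\[
u(t)=u_{h}(t)+\int_{T}^{t}k(t-s)f(s)\,ds,
\]
where $u_{h}$ solves the homogeneous equation with the same Cauchy data as $u$ at time $T$. Since $b,c>0$ the characteristic roots have negative real part, so $u_{h}(t)\to0$. Bounding the integral by $\|f\|_{L^{\infty}((T,+\infty))}\int_{0}^{+\infty}|k(\tau)|\,d\tau$ and letting $t\to+\infty$ and then $T\to+\infty$ gives
\[
\limsup_{t\to+\infty}|u(t)|\le\Bigl(\int_{0}^{+\infty}|k(\tau)|\,d\tau\Bigr)\limsup_{t\to+\infty}|f(t)| ,
\]
so the upper bound in~(\ref{th:alain-lode-u}) follows from the claim that $\int_{0}^{+\infty}|k(\tau)|\,d\tau$ equals $\tfrac1b\coth\bigl(\tfrac{c\pi}{2\sqrt{4b-c^{2}}}\bigr)$ if $c<2\sqrt b$ and $\tfrac1b$ if $c\ge2\sqrt b$.

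For the computation I split into the three regimes. If $c\ge2\sqrt b$ (critical or overdamped) then $k$ is, respectively, $te^{-ct/2}$ or a positive combination of two decaying real exponentials, hence $k\ge0$ on $(0,+\infty)$; integrating the identity $k''+ck'+bk=0$ over $(0,+\infty)$ and using $k(0)=0$, $k'(0)=1$, $k(+\infty)=k'(+\infty)=0$ gives $\int_{0}^{+\infty}k\,d\tau=\tfrac1b$. If $c<2\sqrt b$, set $\omega:=\tfrac12\sqrt{4b-c^{2}}$, so $k(t)=\tfrac1\omega e^{-ct/2}\sin(\omega t)$. On each interval $[n\pi/\omega,(n+1)\pi/\omega]$ the function $k$ has constant sign, and the substitution $t\mapsto t+n\pi/\omega$ turns the $n$-th contribution of $\int|k|$ into $e^{-nc\pi/(2\omega)}$ times the first one, so that
\[
\int_{0}^{+\infty}|k(\tau)|\,d\tau=\frac{1}{\omega\,(1-e^{-c\pi/(2\omega)})}\int_{0}^{\pi/\omega}e^{-ct/2}\sin(\omega t)\,dt .
\]
The remaining integral is elementary; using $\tfrac{c^{2}}{4}+\omega^{2}=b$ it equals $\tfrac{\omega}{b}(1+e^{-c\pi/(2\omega)})$, whence $\int_{0}^{+\infty}|k(\tau)|\,d\tau=\tfrac1b\cdot\tfrac{1+e^{-c\pi/(2\omega)}}{1-e^{-c\pi/(2\omega)}}=\tfrac1b\coth\bigl(\tfrac{c\pi}{4\omega}\bigr)$, which is the stated expression since $4\omega=2\sqrt{4b-c^{2}}$.

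Finally I prove sharpness by producing, for every $(b,c)$, a bounded forcing that realizes the constant in the limsup. If $c\ge2\sqrt b$ the constant forcing $f\equiv1$ yields the steady state $u\equiv\tfrac1b$, which suffices. If $c<2\sqrt b$ the point is to make $f$ resonate with the sign of $k$: take the square wave $f(t):=-\operatorname{sign}(\sin(\omega t))$, so that $\limsup_{t\to+\infty}|f(t)|=1$. Along the sequence $t_{N}:=2\pi N/\omega$ one checks that $f(t_{N}-\tau)=\operatorname{sign}(\sin(\omega\tau))=\operatorname{sign}(k(\tau))$ for a.e.\ $\tau>0$, so Duhamel's formula gives $u(t_{N})=\int_{0}^{t_{N}}|k(\tau)|\,d\tau+o(1)\to\int_{0}^{+\infty}|k(\tau)|\,d\tau$ as $N\to+\infty$; hence $\limsup_{t\to+\infty}|u(t)|$ is exactly the constant computed above.

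The ODE computations and the Duhamel estimate are routine; the only genuine idea is the sharpness construction. The key point making a single fixed $f$ work in the limsup sense (rather than at one isolated time) is that $\operatorname{sign}(k(\tau))$ is independent of the exponential damping factor, and that the reflected forcing $f(t_{N}-\cdot)$, evaluated at the special times $t_{N}$, becomes a fixed function of $\tau$ not depending on $N$. I expect the only technical care needed to be the control of the error term, which is uniform thanks to $k\in L^{1}(0,+\infty)$ and the exponential decay of $u_{h}$.
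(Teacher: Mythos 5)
Your proof is correct. The paper does not actually prove Theorem~\ref{thmbibl:alain-LODE} (it is quoted from~\cite{2005-AccXL-Haraux}), but the route you take is exactly the method the paper itself uses in Section~\ref{sec:scalar} to prove the analogous velocity estimate of Theorem~\ref{thm:scalar}: represent the solution through the impulse-response kernel, identify the optimal constant with the $L^{1}(0,+\infty)$-norm of that kernel, compute it by splitting according to the sign of the discriminant and summing a geometric series in the oscillatory case, and prove sharpness with a forcing term matching the sign of the kernel. The only cosmetic difference is that the paper realizes the optimal constant as the ``time-0'' global bound $\KGB^{0}$ for the unique solution bounded on all of $\re$ (via Theorem~\ref{thm:abstract}), whereas you work with the ultimate bound directly on a half-line and pass to the limit along the sequence $t_{N}=2\pi N/\omega$; the two formulations are equivalent and lead to the same integral $\int_{0}^{+\infty}|k(\tau)|\,d\tau$.
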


As for estimates on $u'(t)$, it turns out that (\ref{th:alain-ode-u'}) improves (\ref{th:loud-u'}) only when $c\leq 2\sqrt{b}$. Even worse, when $b$ is fixed, the constant $4/c$ of Loud's estimate (\ref{th:loud-u'}) tends to~0 as $c\to +\infty$, while this in not true in (\ref{th:alain-ode-u'}).

This is quite difficult to interpret. On the one hand, Loud's method does not rely on any conditions on $c$. On the other hand, the optimality of the constant in (\ref{th:alain-lode-u}) suggests that there are actually two distinct regimes. This separation is natural also if we consider the linear equation (\ref{eqn:scalar}), whose solutions in the homogeneous case $f(t)\equiv 0$ are oscillatory when $c<2\sqrt{b}$, and non-oscillatory when $c\geq 2\sqrt{b}$.

A natural generalization of (\ref{eqn:scalar}) is the abstract linear evolution equation of the form  
\begin{equation}
u''(t)+Bu'(t)+Au(t)=f(t),
\label{eqn:PDE-AB}
\end{equation}
where $A$ and $B$ are suitable operators defined in a Hilbert space. This equation was considered in~\cite{2009-JMPA-FitHar} for $B$ a multiple of the identity and $A$ a possibly nonlinear operator, and then in the general linear case in~\cite{2013-JFA-AloHar}, where the following ultimate bounds are proved.

\begin{thmbibl}[{see~\cite[Theorem~2.1]{2013-JFA-AloHar}}]\label{thmbibl:alain-PDE}

Let us consider equation (\ref{eqn:PDE-AB}) in a separable Hilbert space $H$. Let us assume that
\begin{itemize}

\item $A$ is a self-adjoint operator with dense domain $D(A)$, and satisfying the coercivity assumption 
\begin{equation}
\langle Av,v\rangle\geq b\|v\|_{H}^{2}
\qquad
\forall v\in D(A)
\label{hp:A-coercive}
\end{equation}
for some positive real number $b$,

\item $B:D(A^{1/2})\to D(A^{-1/2})$ is a self-adjoint operator such that
\begin{equation}
c\|v\|_{H}^{2}\leq\langle Bv,v\rangle\leq C\|A^{1/2}v\|_{H}^{2}
\qquad
\forall v\in D(A^{1/2})
\label{hp:BA}
\end{equation}
for some positive real numbers $c$ and $C$,

\item  $f\in L^{\infty}((0,+\infty),H)$.

\end{itemize}

Then every weak solution 
\begin{equation}
u\in C^{0}\left((0,+\infty),D(A^{1/2})\right)\cap C^{1}\left((0,+\infty),H)\strut\right)
\label{defn:weak-sol}
\end{equation}
satisfies
\begin{equation}
\limsup_{t\to +\infty}\|A^{1/2}u(t)\|_{H}\leq
\max\left\{\frac{\sqrt{3C}}{\sqrt{c}},\frac{3}{c\sqrt{2}}\right\}\limsup_{t\to +\infty}\|f(t)\|_{H},
\label{th:alain-pde-u}
\end{equation}
and
\begin{equation}
\limsup_{t\to +\infty}\|u'(t)\|_{H}\leq
\max\left\{\frac{\sqrt{3C}}{\sqrt{c}},\frac{3}{c\sqrt{2}}\right\}\limsup_{t\to +\infty}\|f(t)\|_{H}.
\label{th:alain-pde-u'}
\end{equation}

\end{thmbibl}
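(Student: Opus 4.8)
The plan is to reduce the abstract equation (\ref{eqn:PDE-AB}) to a family of scalar-type estimates by spectral decomposition, and then combine them with the standard energy method used in the ODE case. First I would introduce the natural energy functional adapted to this problem. A good candidate is a perturbed energy of the form
\[
E(t)=\frac12\|u'(t)\|_H^2+\frac12\|A^{1/2}u(t)\|_H^2+\varepsilon\langle u'(t),u(t)\rangle,
\]
for a small parameter $\varepsilon$ to be chosen (roughly of order $c$), together with the auxiliary quantity $\langle Bu'(t),u'(t)\rangle$ which controls the dissipation from below by $c\|u'\|_H^2$. The point of the $\varepsilon$-cross term is to recover coercivity of $E$ in the full norm and, after differentiating along solutions and using assumptions (\ref{hp:A-coercive}) and (\ref{hp:BA}), to obtain a differential inequality of the form $E'(t)\le -\delta E(t)+K\|f(t)\|_H^2$ (or a linear-in-$\|f\|$ version), from which the $\limsup$ bound follows by a Gronwall-type argument. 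The constants $\sqrt{3C}/\sqrt c$ and $3/(c\sqrt2)$ should come out of optimizing the choice of $\varepsilon$ against the two competing regimes, exactly as the $\max$ of two terms appears in (\ref{th:alain-ode-u}).

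Concretely, the key steps in order are: (i) justify that it suffices to prove the estimate with $\|f\|_{L^\infty((0,+\infty),H)}$ replaced by $\limsup_{t\to\infty}\|f(t)\|_H$, by translating the time origin and using the continuous dependence/decay of the homogeneous part, so that without loss of generality we may assume $\|f(t)\|_H\le M$ for all $t$; (ii) compute $\frac{d}{dt}E(t)$ along a weak solution, using $u''=-Bu'-Au+f$, which produces terms $-\langle Bu',u'\rangle$, $+\varepsilon\|u'\|_H^2$, $-\varepsilon\langle Bu',u\rangle$, $-\varepsilon\|A^{1/2}u\|_H^2$, and the forcing terms $\langle f,u'\rangle+\varepsilon\langle f,u\rangle$; (iii) absorb the indefinite term $-\varepsilon\langle Bu',u\rangle$ using the upper bound $\langle Bv,v\rangle\le C\|A^{1/2}v\|_H^2$ together with Cauchy--Schwarz in the $B$-inner-product and a Young inequality, and absorb $\langle f,u'\rangle$, $\langle f,u\rangle$ similarly; (iv) choose $\varepsilon$ small enough (in terms of $c$ and $C$) that the coefficients of $\|u'\|_H^2$ and $\|A^{1/2}u\|_H^2$ on the right-hand side are strictly negative, yielding $E'\le -\delta E+ (\text{const})\,M\,\sqrt{E}$ or the cleaner quadratic form; (v) deduce $\limsup_{t\to\infty}E(t)\le (\text{const})\,M^2$, and finally translate this back into the stated bounds on $\|A^{1/2}u(t)\|_H$ and $\|u'(t)\|_H$ using the coercivity of $E$.

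The main obstacle I expect is step (iii)–(iv): controlling the cross term $-\varepsilon\langle Bu',u\rangle$ without losing too much. The difficulty is that $B$ is only bounded \emph{above} relative to $A^{1/2}$ (via $C$) and \emph{below} relative to the identity (via $c$), so one must be careful to split $\langle Bu',u\rangle\le \langle Bu',u'\rangle^{1/2}\langle Bu,u\rangle^{1/2}\le \langle Bu',u'\rangle^{1/2}\,C^{1/2}\|A^{1/2}u\|_H$ and then distribute the two factors by Young's inequality so that the $\langle Bu',u'\rangle$ part is eaten by the genuine dissipation and the $\|A^{1/2}u\|_H^2$ part is eaten by the $-\varepsilon\|A^{1/2}u\|_H^2$ term — while simultaneously keeping enough of the dissipation in reserve to also absorb the forcing term $\langle f,u'\rangle$. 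Balancing these three demands is what forces the specific value of $\varepsilon$ and produces the two-regime constant $\max\{\sqrt{3C/c},\,3/(c\sqrt2)\}$; in particular the regime $\sqrt{3C/c}$ should correspond to the case where the $C$-dependent absorption is the binding constraint, and $3/(c\sqrt2)$ to the case where the pure damping absorption dominates. A secondary technical point is that (\ref{eqn:PDE-AB}) need only have weak solutions in the class (\ref{defn:weak-sol}), so all the energy identities must be justified by a density/regularization argument rather than by direct differentiation; this is routine but needs to be mentioned.
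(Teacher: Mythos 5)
This is a background result cited from Aloui--Haraux (\cite{2013-JFA-AloHar}); the present paper does not prove it, so there is no internal proof to compare against. Your proposal does, however, match the strategy of the cited source: a perturbed energy functional with a cross term $\varepsilon\langle u',u\rangle$, a differential inequality obtained by testing the equation against $u'+\varepsilon u$, and a Gronwall/ultimate-bound argument. The energy identity you compute in step (ii) is correct (after noting the cancellation $\langle Au,u'\rangle=\langle A^{1/2}u,A^{1/2}u'\rangle$), and steps (iii)--(v) are the right bookkeeping.

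Two remarks. First, your opening sentence about ``reduce to a family of scalar-type estimates by spectral decomposition'' is a false start and should be dropped: with a general damping operator $B$ that need not commute with $A$, there is no simultaneous spectral decomposition, and indeed your concrete plan never uses one. Second, the difficulty you flag in step (iii) --- absorbing $-\varepsilon\langle Bu',u\rangle$ --- can be sidestepped more cleanly by modifying the energy to
\[
\Phi(t)=\frac12\|u'(t)\|_H^2+\frac12\|A^{1/2}u(t)\|_H^2+\varepsilon\langle u'(t),u(t)\rangle+\frac{\varepsilon}{2}\langle Bu(t),u(t)\rangle,
\]
so that $\frac{d}{dt}\bigl(\frac{\varepsilon}{2}\langle Bu,u\rangle\bigr)=\varepsilon\langle Bu',u\rangle$ exactly cancels the offending cross term (by the self-adjointness of $B$), leaving only the genuine dissipation $-\langle Bu',u'\rangle+\varepsilon\|u'\|_H^2-\varepsilon\|A^{1/2}u\|_H^2$ plus the forcing terms. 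One then uses $\langle Bu,u\rangle\le C\|A^{1/2}u\|_H^2$ only to show $\Phi$ is comparable to the standard energy, which is needed anyway. This removes the delicate three-way balancing act you describe and is closer to how the constants $\sqrt{3C}/\sqrt{c}$ and $3/(c\sqrt{2})$ actually emerge. Finally, for the regularization point at the end: the cleanest justification is density of strong solutions together with continuous dependence in the energy norm, which you correctly identify as routine but necessary.
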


Apparently the coercivity constant $b$ does not appear in the final estimates (\ref{th:alain-pde-u}) and (\ref{th:alain-pde-u'}), but its presence is actually ``hidden'' in the left-hand side of (\ref{th:alain-pde-u}), and in the choice of the constant $C$. For example, if we restrict to the scalar case where $H=\re$, $Bv=cv$ and $Av=bv$, then (\ref{hp:BA}) holds true with $C:=c/b$, and hence the estimate on the velocity reads as
\begin{equation}
\limsup_{t\to +\infty}|u'(t)|\leq
\max\left\{\frac{\sqrt{3}}{\sqrt{b}},\frac{3}{c\sqrt{2}}\right\}\limsup_{t\to +\infty}|f(t)|.
\nonumber
\end{equation}

Compared with (\ref{th:loud-u'}), this estimate is better for small values of $c$, but worse for large values of $c$, and again the constant does not tend to~$0$ as $c\to +\infty$.

\paragraph{\textmd{\textit{Our results}}}

Since the known estimates on $u(t)$ are optimal, at least in the scalar case (see Theorem~\ref{thmbibl:alain-LODE} above), in this paper we focus on estimates on derivatives, which are arguably harder, and we address three questions.

In the first part of the paper we compare different notions of bound. To this end, we consider equations of the form (\ref{eqn:scalar}) or (\ref{eqn:PDE-AB}) with forcing term $f(t)$ that is defined and bounded on the whole real line, and not just for positive times. Under these assumptions, these equations admit a \emph{unique} global solution that is bounded for all times, positive and negative. In the scalar case this solution satisfies an inequality of the form 
\begin{equation}
|u'(t)|\leq K_{GB}\|f\|_{L^{\infty}(\re,\re)}
\qquad
\forall t\in\re,
\label{defn:GB-intro}
\end{equation}
and similarly in the vector case. The first result of this paper is that the optimal constant $K_{GB}$ for which \emph{global bounds} such as (\ref{defn:GB-intro}) hold true (for the unique global solution) coincides with the optimal constant for which \emph{ultimate bounds} such as (\ref{th:loud-u'}) or (\ref{th:alain-ode-u'}) hold true (for all solutions defined for positive times). In Theorem~\ref{thm:abstract} this result is proved in a general framework that contains as a special case both the estimates on $u(t)$ and the estimates on $u'(t)$, and applies to large classes of evolutions problems including (\ref{eqn:scalar}) and (\ref{eqn:PDE-AB}).

In the second part of the paper we focus on the scalar linear equation (\ref{eqn:scalar}), and in Theorem~\ref{thm:scalar} we compute the optimal constant that appears in ultimate or global bounds. From the explicit computation we deduce that this constant decreases both with respect to $b$ and with respect to $c$, and of course it tends to~0 as $c\to +\infty$. The monotonicity properties are quite delicate, and for this reason we suspect it could be difficult to extend them to vector equations (the monotonicity with respect to the ``elastic term'' is even false in the vector case, as we show in Corollary~\ref{cor:no-monotone}).

In the third part of the paper we address the vector equation
\begin{equation}
u''(t)+cu'(t)+Au(t)=f(t),
\label{eqn:vector}
\end{equation}
namely equation (\ref{eqn:PDE-AB}) in the special case where the damping is a positive multiple of the velocity. In Theorem~\ref{thm:KcA-upper} we prove bounds on the velocity with a constant that tends to~0 as $c\to +\infty$. To be more precise, in finite dimension $d$ the constant is always less than $2\sqrt{d}/c$, while in infinite dimensions it is always less that $O((\log c)^{1/2}/c)$. 

Finally, in Theorem~\ref{thm:KcA-lower} we show that the term $\sqrt{d}$ is essential in finite dimension, and that in infinite dimensions the correction $(\log c)^{1/2}$ is essential if the eigenvalues of the operator are unbounded but grow at most exponentially (this case includes many operators that are important in the applications, for example the Dirichlet Laplacian, as shown in Remark~\ref{rmk:Laplacian}). The need of this unexpected correction could explain why it was so hard to extend Loud's result to partial differential equations.

\paragraph{\textmd{\textit{Structure of the paper}}}

This paper is organized as follows. In section~\ref{sec:equivalence} we prove that different bounds on the velocity hold true with the same optimal constants. In section~\ref{sec:scalar} we present optimal bounds for the scalar equation (\ref{eqn:scalar}), and we discuss the dependence of the optimal constant on the parameters $b$ and $c$. In section~\ref{sec:vector} we address the vector equation (\ref{eqn:vector}), and we prove our estimate from above and from below for the optimal constants. Finally, in section~\ref{sec:open} we present some future perspectives and open problems.

%\clearpage

\setcounter{equation}{0}
\section{Different notions of optimal bounds}\label{sec:equivalence}

\subsection{Functional setting and definitions}

In this section we consider the following functional setting:
\begin{itemize}

\item  $\X$ is a (real) Banach space,

\item  $\Y$ is a linear subspace with the norm inherited from $\X$,

\item  $p$ is a seminorm in $\X$ that is continuous with respect to the norm of $\X$, namely there exists a real number $P$ such that
\begin{equation}
p(x)\leq P\|x\|_{\X}
\qquad
\forall x\in\X,
\label{hp:p-P}
\end{equation} 

\item  $\A$ is the infinitesimal generator of a linear semigroup $S(t)$ on $\X$, which we assume to be exponentially damped in the sense that there exist two positive real numbers $C$ and $\delta$ such that
\begin{equation}
\|S(t)x\|_{\X}\leq Ce^{-\delta t}\|x\|_{\X}
\qquad
\forall t\geq 0,\quad\forall x\in\X.
\label{hp:St}
\end{equation} 

\end{itemize}

We consider the abstract evolution equation
\begin{equation}
U'(t)+\A U(t)=F(t),
\label{eqn:abstract}
\end{equation}
where $F:\re\to\X$ or $F:(0,+\infty)\to\X$ is a suitable bounded forcing term. When $F$ is globally defined we look for solutions that are globally bounded. When $F$ is defined only for positive times, we consider solutions that satisfy a suitable initial condition 
\begin{equation}
U(0)=U_{0}.
\label{eqn:datum}
\end{equation}

In the following statement we summarize some well-known results concerning existence of such solutions, and their representation in terms of the semigroup (see~\cite{alain:book-LNM,1998-MACo-Haraux}).

\begin{thmbibl}[Existence of bounded solutions of different types]\label{thmbibl:existence}

Let us consider the evolution equation (\ref{eqn:abstract}) in the functional setting described above. 

Then the following statements hold true.
\begin{enumerate}
\renewcommand{\labelenumi}{(\arabic{enumi})}

\item  \emph{(Bounded solutions for positive times).} For every source $F\in L^{1}_{\mathrm{loc}}((0,+\infty),\X)$, and every initial condition $U_{0}\in\X$, problem (\ref{eqn:abstract})--(\ref{eqn:datum}) admits a unique mild solution $U\in C^{0}([0,+\infty),\X)$, given by the formula
\begin{equation}
U(t)=S(t)U_{0}+\int_{0}^{t}S(t-\tau)F(\tau)\,d\tau
\qquad
\forall t\geq 0.
\label{th:mild-positive}
\end{equation}

If in addition the forcing term $F$ is eventually bounded, namely
\begin{equation}
\limsup_{t\to +\infty}\|F(t)\|_{\X}<+\infty,
\label{hp:F-limsup}
\end{equation}
then the solution given by (\ref{th:mild-positive}) is bounded in $\X$.

\item  \emph{(Bounded solutions for all times).} For every $F\in L^{\infty}(\re,\X)$ there exists a unique mild solution to equation (\ref{eqn:abstract}) that is globally bounded (both for positive and negative times). This solution is given by the formula
\begin{equation}
U(t)=\int_{0}^{+\infty}S(\tau)F(t-\tau)\,d\tau
\qquad
\forall t\in\re.
\label{th:mild-global}
\end{equation}

If in addition $F$ is periodic, then the solution given by (\ref{th:mild-global}) is periodic as well.

\end{enumerate}
\end{thmbibl}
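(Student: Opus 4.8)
The plan is to establish the two statements separately, in both cases relying on nothing more than the variation-of-constants representation and the exponential decay bound (\ref{hp:St}). For part (1), existence and uniqueness of the mild solution on $[0,+\infty)$ is the classical variation-of-constants argument: since $S(\cdot)$ is strongly continuous and, by (\ref{hp:St}), uniformly bounded in operator norm by $C$, and $F\in L^{1}_{\mathrm{loc}}$, the integrand $\tau\mapsto S(t-\tau)F(\tau)$ is Bochner integrable on $[0,t]$, so (\ref{th:mild-positive}) defines an element $U(t)\in\X$ for every $t\geq 0$, and continuity of $t\mapsto U(t)$ follows from dominated convergence together with the strong continuity of $S(\cdot)$; uniqueness is immediate from the definition of mild solution, since the difference of two mild solutions with the same datum solves the homogeneous problem with zero datum and hence vanishes identically. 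For the boundedness claim, assuming (\ref{hp:F-limsup}) I would choose $T\geq 0$ and $M\geq 0$ with $\|F(\tau)\|_{\X}\leq M$ for a.e.\ $\tau\geq T$, split the integral in (\ref{th:mild-positive}) at $\tau=T$ for $t\geq T$, bound the tail by $\int_{T}^{t}Ce^{-\delta(t-\tau)}M\,d\tau\leq CM/\delta$ and the head by $Ce^{-\delta t}\int_{0}^{T}e^{\delta\tau}\|F(\tau)\|_{\X}\,d\tau$ (a bounded function of $t$), and finally combine with $\|S(t)U_{0}\|_{\X}\leq C\|U_{0}\|_{\X}$ and the continuity (hence boundedness) of $U$ on $[0,T]$ to conclude that $U$ is bounded on $[0,+\infty)$.

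For part (2), the integral in (\ref{th:mild-global}) converges absolutely because $\|S(\tau)F(t-\tau)\|_{\X}\leq Ce^{-\delta\tau}\|F\|_{L^{\infty}(\re,\X)}$ is integrable in $\tau$ over $[0,+\infty)$; this already yields the global bound $\|U(t)\|_{\X}\leq (C/\delta)\|F\|_{L^{\infty}(\re,\X)}$ and, by dominated convergence, the continuity of $U$. To see that $U$ is a mild solution of (\ref{eqn:abstract}), I would verify the evolution identity $U(t)=S(t-s)U(s)+\int_{s}^{t}S(t-\tau)F(\tau)\,d\tau$ for all $t\geq s$: plugging (\ref{th:mild-global}) into $S(t-s)U(s)$, using the semigroup law $S(t-s)S(\sigma)=S(t-s+\sigma)$, and changing variables $\tau=t-s+\sigma$ rewrites the first term as $\int_{t-s}^{+\infty}S(\tau)F(t-\tau)\,d\tau$, while the substitution $\tau\mapsto t-\tau$ rewrites the integral term as $\int_{0}^{t-s}S(\tau)F(t-\tau)\,d\tau$, and the two pieces add up to $\int_{0}^{+\infty}S(\tau)F(t-\tau)\,d\tau=U(t)$. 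Uniqueness among globally bounded solutions again follows from (\ref{hp:St}): if $V$ is another such solution, then $W:=U-V$ solves the homogeneous equation, so $W(t)=S(t-s)W(s)$ and hence $\|W(t)\|_{\X}\leq Ce^{-\delta(t-s)}\sup_{r\in\re}\|W(r)\|_{\X}\to 0$ as $s\to -\infty$, which forces $W\equiv 0$. Finally, if $F$ has period $T$, then $U(t+T)=\int_{0}^{+\infty}S(\tau)F(t+T-\tau)\,d\tau=\int_{0}^{+\infty}S(\tau)F(t-\tau)\,d\tau=U(t)$, so $U$ is $T$-periodic.

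There is no genuinely hard step: the whole argument reduces to Bochner-integral manipulations and the exponential estimate (\ref{hp:St}). The points that require care are making precise the notion of mild solution on all of $\re$ — namely the evolution identity used above — so that the verification of (\ref{th:mild-global}) is meaningful, and the change of variables in the convolution integral, where one must track the limits of integration and invoke absolute convergence to justify splitting $\int_{0}^{+\infty}=\int_{0}^{t-s}+\int_{t-s}^{+\infty}$. The uniqueness argument for the globally bounded solution, obtained by letting $s\to-\infty$, is the conceptual core of part (2), but it is short once the decay bound is available.
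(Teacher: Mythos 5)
The paper does not actually prove this statement: it is a bibliographic theorem, presented as a summary of well-known facts and attributed to~\cite{alain:book-LNM,1998-MACo-Haraux}, so there is no in-paper argument to compare against. Judged on its own terms, your proposal is the standard variation-of-constants argument and is essentially correct: the representation formulas, the boundedness estimate obtained by splitting the Duhamel integral and using (\ref{hp:St}), the verification that (\ref{th:mild-global}) satisfies the evolution identity, the uniqueness of the globally bounded solution obtained by letting $s\to-\infty$ in $W(t)=S(t-s)W(s)$, and the periodicity check are all sound and are exactly what one finds in the cited references.

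One step is written too quickly. In part (2) you assert continuity of $U$ ``by dominated convergence'' directly from $U(t)=\int_{0}^{+\infty}S(\tau)F(t-\tau)\,d\tau$. As stated this does not work: for $F\in L^{\infty}(\re,\X)$ one generally does \emph{not} have $F(t_{n}-\tau)\to F(t-\tau)$ for almost every $\tau$ as $t_{n}\to t$, so the integrand need not converge pointwise. The fix is the change of variables you already use for the evolution identity: write $U(t)=\int_{-\infty}^{t}S(t-\sigma)F(\sigma)\,d\sigma$, so that the time increment falls on the strongly continuous semigroup rather than on $F$. Then for each fixed $\sigma<t$ one has $\bigl[S(t_{n}-\sigma)-S(t-\sigma)\bigr]F(\sigma)\to 0$ by strong continuity, with a dominant of the form $2Ce^{\delta}e^{-\delta(t-\sigma)}\|F\|_{L^{\infty}(\re,\X)}$ valid for $|t_{n}-t|\le 1$, and the remaining short integral over the interval between $t_{n}$ and $t$ is $O(|t_{n}-t|)$. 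With this correction your proof is complete.
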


In the sequel we restrict ourselves to forcing terms with values in the subspace $\Y$, and we investigate the extent to which a bound on the norm in $\X$ of the forcing term $F(t)$ yields a bound on the seminorm $p$ of solutions $U(t)$. In particular, in the case of solutions defined for positive times we are interested in estimates such as 
\begin{equation}
\limsup_{t\to +\infty}p(U(t))\leq K\limsup_{t\to +\infty}\|F(t)\|_{\X},
\label{defn:UB}
\end{equation}
while in the case of solutions that are globally bounded or periodic we are interested in estimates such as
\begin{equation}
\sup_{t\in\re}p(U(t))\leq K\|F(t)\|_{L^{\infty}(\re,\X)},
\label{defn:GB}
\end{equation}
or even the ``time 0'' variant
\begin{equation}
p(U(0))\leq K\|F(t)\|_{L^{\infty}(\re,\X)}.
\label{defn:GB-0}
\end{equation}

We refer to estimates of the form (\ref{defn:UB}) as ``ultimate bounds'', and we refer to estimates of the form (\ref{defn:GB}) as ``global bounds'', or ``periodic bounds'' if the forcing term is also periodic. 

\begin{defn}[Optimal bounds]\label{defn:BC}
\begin{em}

Let us consider equation (\ref{eqn:abstract}) under the functional setting described above. 

\begin{itemize}

\item  The optimal ultimate bound $\KUB(\A,\Y,p)$ is the smallest constant $K$ for which (\ref{defn:UB}) holds true for every forcing term $F\in L^{1}_{\mathrm{loc}}((0,+\infty),\Y)$ satisfying (\ref{hp:F-limsup}), and every corresponding solution $U(t)$ to (\ref{eqn:abstract}) given by (\ref{th:mild-positive}).

\item  The optimal global bound $\KGB(\A,\Y,p)$ is the smallest constant $K$ for which (\ref{defn:GB}) holds true for every forcing term $F\in L^{\infty}(\re,\Y)$, and every corresponding solution $U(t)$ to (\ref{eqn:abstract}) given by (\ref{th:mild-global}).

\item  The optimal periodic bound $\KPB(\A,\Y,p)$ is the smallest constant $K$ for which (\ref{defn:GB}) holds true for every periodic forcing term $F\in L^{\infty}(\re,\Y)$, and every corresponding periodic solution $U(t)$ to (\ref{eqn:abstract}) given by (\ref{th:mild-global}).

\item  The ``time 0'' bounds $\KGB^{0}(\A,\Y,p)$ and $\KPB^{0}(\A,\Y,p)$ are defined in analogy to $\KGB(\A,\Y,p)$ and $\KPB(\A,\Y,p)$, just starting with the ``time 0'' inequality (\ref{defn:GB-0}) instead of (\ref{defn:GB}).

\end{itemize}

\end{em}
\end{defn}

%\clearpage

\subsection{Equivalence of optimal bounds}

The main and somewhat surprising result of this section is the equivalence between the different notions of optimal bounds.

\begin{thm}[Equivalence of optimal bounds]\label{thm:abstract}

Let us consider equation (\ref{eqn:abstract}) under the functional setting described above. 

Then the optimal bounds introduced in Definition~\ref{defn:BC} are equal, namely
\begin{equation}
\KUB(\A,\Y,p)=\KGB(\A,\Y,p)=\KGB^{0}(\A,\Y,p)=\KPB(\A,\Y,p)=\KPB^{0}(\A,\Y,p).
\nonumber
\end{equation}

\end{thm}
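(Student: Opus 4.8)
The plan is to establish a cycle of inequalities among the five constants, exploiting the two semigroup representation formulas \eqref{th:mild-positive} and \eqref{th:mild-global}. The easy directions come first. I would show $\KPB^{0}\le\KPB\le\KGB$ and $\KGB^{0}\le\KGB$ directly from the definitions, since a periodic forcing term is a special globally bounded one, and evaluating the $\sup$ in \eqref{defn:GB} at $t=0$ only weakens the inequality. Next I would show $\KGB\le\KUB$: given a globally bounded $F\in L^{\infty}(\re,\Y)$ and its global solution \eqref{th:mild-global}, for each fixed $s\in\re$ consider the shifted forcing term $F_{s}(t):=F(t+s)$ restricted to $(0,+\infty)$ together with a suitable initial datum so that the solution from \eqref{th:mild-positive} coincides with the shifted global solution $U(\cdot+s)$; since $\|F_{s}\|_{L^{\infty}((0,+\infty),\X)}\le\|F\|_{L^{\infty}(\re,\X)}$ and $\limsup_{t\to+\infty}\|F_{s}(t)\|_{\X}\le\|F\|_{L^{\infty}(\re,\X)}$, the ultimate bound \eqref{defn:UB} applied to this solution gives $\limsup_{t\to+\infty}p(U(t+s))\le\KUB\|F\|_{L^{\infty}(\re,\X)}$; but by \eqref{hp:St} and \eqref{th:mild-global} the global solution is "asymptotically autonomous'' enough that $\sup_{t}p(U(t))$ is controlled by this limsup — more carefully, one shows $p(U(s))\le\limsup_{t\to+\infty}p(U(t+s'))$ for a different shift, or simply notes the estimate is uniform in $s$. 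So the real content is the reverse chain $\KUB\le\KPB^{0}$ (or $\KUB\le\KGB^{0}$), which closes the cycle.

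For $\KUB\le\KGB^{0}$: take any $F\in L^{1}_{\mathrm{loc}}((0,+\infty),\Y)$ satisfying \eqref{hp:F-limsup}, set $M:=\limsup_{t\to+\infty}\|F(t)\|_{\X}$, and fix $\ep>0$. Choose $T$ with $\|F\|_{L^{\infty}((T,+\infty),\X)}\le M+\ep$. For $t>T$, split the Duhamel formula \eqref{th:mild-positive} as $U(t)=S(t-T)U(T)+\int_{T}^{t}S(t-\tau)F(\tau)\,d\tau$. The first term tends to zero in $\X$ by \eqref{hp:St}, hence its $p$-seminorm tends to zero by \eqref{hp:p-P}. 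For the integral term, I would extend $F|_{(T,+\infty)}$ to a function $\widetilde F\in L^{\infty}(\re,\Y)$ with $\|\widetilde F\|_{L^{\infty}(\re,\X)}\le M+\ep$ (for instance by periodic repetition of a truncation, or by zero extension for negative times, whichever keeps values in $\Y$), and compare $\int_{T}^{t}S(t-\tau)F(\tau)\,d\tau$ with the global solution $V(t)=\int_{0}^{+\infty}S(\sigma)\widetilde F(t-\sigma)\,d\sigma$ associated to $\widetilde F$. The difference is $\int_{t-T}^{+\infty}S(\sigma)\widetilde F(t-\sigma)\,d\sigma$, which by \eqref{hp:St} is bounded in $\X$ by $C(M+\ep)\delta^{-1}e^{-\delta(t-T)}\to 0$. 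Therefore $p(U(t))\le p(V(t))+o(1)$ as $t\to+\infty$, and since $V$ is a global solution of the type in \eqref{th:mild-global}, translating time so as to evaluate at $0$ gives $p(V(t))\le\KGB^{0}\,\|\widetilde F\|_{L^{\infty}(\re,\X)}\le\KGB^{0}(M+\ep)$. Letting $t\to+\infty$ and then $\ep\to 0$ yields $\limsup_{t\to+\infty}p(U(t))\le\KGB^{0}\,M$, i.e. $\KUB\le\KGB^{0}$.

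A parallel (in fact easier) argument gives $\KUB\le\KPB^{0}$ by taking the periodic extension of a truncation of $F$; combined with $\KPB^{0}\le\KGB^{0}\le\KGB\le\KUB$ this forces all five constants to be equal. The main obstacle I anticipate is the extension/comparison step: one must extend $F$ (or rather a far-out tail of $F$) to a globally bounded — and, when needed, periodic — function \emph{with values in $\Y$} and \emph{without increasing the $L^{\infty}$ norm of the $\X$-norm}, and then show that the tail of the semigroup integral that one has added or removed is negligible in the seminorm $p$. The exponential damping \eqref{hp:St} together with the continuity \eqref{hp:p-P} of $p$ make each such error term go to zero, but one has to be careful that the initial-datum contribution $S(t-T)U(T)$ and the "missing tail'' $\int_{t-T}^{\infty}S(\sigma)\widetilde F(t-\sigma)\,d\sigma$ are both handled uniformly as $t\to+\infty$. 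Once these estimates are organized, the equivalence of all the optimal bounds follows by chasing the cycle of inequalities.
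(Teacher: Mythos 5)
Your cycle of inequalities has a genuine gap, and it is exactly where you yourself were least careful: the claim $\KGB\le\KUB$. You propose, for a globally bounded $F$ with global solution $U$, to shift time by $s$ and apply the ultimate bound to $U(\cdot+s)$ on $(0,+\infty)$. That gives
\begin{equation}
\limsup_{t\to +\infty}p(U(t+s))\le\KUB\limsup_{t\to +\infty}\|F(t+s)\|_{\X}
\nonumber
\end{equation}
for every $s$, but the left-hand side is the same number $\limsup_{t\to+\infty}p(U(t))$ for every $s$, and this number does \emph{not} control $\sup_{t\in\re}p(U(t))$. (Take the scalar equation with $f(t)=e^{-|t|}$: the global solution tends to $0$ at both $\pm\infty$, so the limsup at infinity of $|u'|$ is $0$, while $\sup_{t}|u'(t)|>0$.) The phrase ``asymptotically autonomous enough'' / ``the estimate is uniform in $s$'' does not repair this: the estimate being the same for every $s$ is precisely why it fails to say anything about $p(U(s))$ itself. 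Thus $\KGB\le\KUB$ is not established, and your final cycle $\KPB^{0}\le\KGB^{0}\le\KGB\le\KUB\le\KPB^{0}$ does not close at the $\KGB\le\KUB$ step. Your other pieces are fine and essentially match the paper: the easy monotonicities $\KPB^{0}\le\KPB\le\KGB$, $\KGB^{0}\le\KGB$ are correct, and your argument for $\KUB\le\KGB^{0}$ (zero-extend the far tail of $F$, compare via Duhamel, use \eqref{hp:St} and \eqref{hp:p-P} to kill the tail and the initial-datum term) is the same tail-cutting idea the paper uses for $\KUB\le\KGB$.

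The missing link is the inequality the paper proves instead of $\KGB\le\KUB$: namely $\KGB\le\KPB$. Fix a bounded $F$ and its global solution $U$; for $T>0$ let $F_{T}$ be the $T$-periodic function agreeing with $F$ on $[0,T)$ and $U_{T}$ its periodic solution. From \eqref{th:mild-global}, $U(0)-U_{T}(0)=\int_{T}^{+\infty}S(\tau)(F(-\tau)-F_{T}(-\tau))\,d\tau$, which by \eqref{hp:St} has $\X$-norm $O(e^{-\delta T})$; then \eqref{hp:p-P} gives $p(U(0))\le\KPB^{0}\|F\|_{L^{\infty}(\re,\X)}+o(1)$, and letting $T\to+\infty$ yields $\KGB^{0}\le\KPB^{0}$. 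Periodicity is what makes the two identifications $\sup=\limsup$ and $\|F\|_{L^{\infty}}=\limsup\|F\|$ legitimate, which is exactly what fails for a general bounded $F$. Combined with the trivial $\KPB\le\KUB$ (for periodic data both those equalities hold, so the ultimate bound applies at face value) and with your correct $\KUB\le\KGB^{0}$ and the translation identities $\KGB=\KGB^{0}$, $\KPB=\KPB^{0}$, this closes the cycle. So you had most of the pieces, but the one direction you waved through is the one that genuinely requires the periodic approximation.
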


\begin{proof}

Since $\A$, $\Y$ and $p$ are fixed, for the sake of simplicity we drop the dependence on them in the constants.

The equivalence of $\KGB$ and $\KGB^{0}$ follows from the invariance of (\ref{eqn:abstract}) by time-translations, meaning that if $U(t)$ is the solution corresponding to some forcing term $F(t)$, then for every $t_{0}\in\re$ it turns out that $U(t+t_{0})$ is the solution corresponding to $F(t+t_{0})$. For the same reason, $\KPB$ is equal to $\KPB^{0}$.

Therefore, it is enough to prove that $\KPB\leq\KUB\leq\KGB\leq\KPB$.

\subparagraph{\textmd{\textit{Inequality $\KPB\leq\KUB$.}}}

Let $F:\re\to\Y$ be any forcing term that is periodic and essentially bounded, and let $U(t)$ be the corresponding periodic solution to (\ref{eqn:abstract}) given by (\ref{th:mild-global}). Then it turns out that
\begin{equation}
\sup_{t\in\re}p(U(t))=
\limsup_{t\to +\infty}p(U(t))\leq
\KUB\limsup_{t\to +\infty}\|F(t)\|_{\X}=
\KUB\|F\|_{L^{\infty}(\re,\X)},
\nonumber
\end{equation}
where the two equalities follow from the periodicity of $U(t)$ and $F(t)$, respectively, and the inequality follows from the definition of $\KUB$ once that we regard $F(t)$ and $U(t)$ as functions defined for nonnegative times.

This proves the required inequality.

\subparagraph{\textmd{\textit{Inequality $\KUB\leq\KGB$.}}}

Let $F\in L^{1}_{\mathrm{loc}}((0,+\infty),\X)$ be a forcing term satisfying (\ref{hp:F-limsup}), and let $U(t)$ be a corresponding solution to (\ref{eqn:abstract}). For every $\ep>0$, let $T_{\ep}$ be such that
\begin{equation}
\|F(t)\|_{\X}\leq\ep+\limsup_{t\to +\infty}\|F(t)\|_{\X}
\nonumber
\end{equation}
for almost every $t\geq T_{\ep}$. Let $F_{\ep}\in L^{\infty}(\re,\X)$ be defined by
\begin{equation}
F_{\ep}(t):=\left\{
\begin{array}{l@{\qquad}l}
F(t) & \mbox{if }t\geq T_{\ep}, \\[0.5ex]
0 & \mbox{if }t< T_{\ep},
\end{array}\right.
\nonumber
\end{equation}
and let $U_{\ep}(t)$ be the unique globally bounded solution corresponding to $F_{\ep}(t)$.

The function $U(t)-U_{\ep}(t)$ is a solution to the homogeneous equation in the half-line $t\geq T_{\ep}$, and therefore from (\ref{hp:p-P}) and (\ref{hp:St}) we deduce that
\begin{equation}
\lim_{t\to +\infty}p(U(t)-U_{\ep}(t))\leq
P\cdot\lim_{t\to +\infty}\|U(t)-U_{\ep}(t)\|_{\X}=0.
\nonumber
\end{equation}

At this point we conclude that
\begin{eqnarray*}
\limsup_{t\to +\infty}p(U(t)) & \leq & 
\limsup_{t\to +\infty}\left\{p(U(t)-U_{\ep}(t))+p(U_{\ep}(t))\strut\right\}  \\[0.5ex]
& = &  \limsup_{t\to +\infty}p(U_{\ep}(t))  \\[0.5ex]
& \leq &  \sup_{t\in\re}p(U_{\ep}(t))  \\[0.5ex]
& \leq &  \KGB\|F_{\ep}\|_{L^{\infty}(\re,\X)}  \\[0.5ex]
& = &  \KGB\|F\|_{L^{\infty}((T_{\ep},+\infty),\X)}  \\[0.5ex]
& \leq & \KGB\left(\ep+\limsup_{t\to +\infty}\|F(t)\|_{\X}\right).
\end{eqnarray*}

Letting $\ep\to 0^{+}$ we obtain the required inequality.

\subparagraph{\textmd{\textit{Inequality $\KGB\leq\KPB$.}}}

Let $F\in L^{\infty}(\re,\X)$ be a bounded forcing term, and let $U(t)$ denote the corresponding solution to (\ref{eqn:abstract}) that is globally bounded. For every positive real number $T$, let $F_{T}\in L^{\infty}(\re,\X)$ denote the $T$-periodic function that coincides with $F(t)$ for $t\in[0,T)$, and let $U_{T}(t)$ denote the corresponding periodic solution to (\ref{eqn:abstract}). From (\ref{th:mild-global}) we know that
\begin{equation}
U(0)-U_{T}(0)=
\int_{0}^{+\infty}S(\tau)(F(-\tau)-F_{T}(-\tau))\,d\tau=
\int_{T}^{+\infty}S(\tau)(F(-\tau)-F_{T}(-\tau))\,d\tau,
\nonumber
\end{equation}
so that from (\ref{hp:St}) we deduce that
\begin{eqnarray*}
\|U(0)-U_{T}(0)\|_{\X} & \leq & 
\int_{T}^{+\infty}\|S(\tau)(F(-\tau)-F_{T}(-\tau))\|_{\X}\,d\tau  \\
& \leq & 
2\|F\|_{L^{\infty}(\re,\X)}\int_{T}^{+\infty}Ce^{-\delta\tau}\,d\tau,
\end{eqnarray*}
and in particular $U(0)-U_{T}(0)\to 0$ in $\X$ as $T\to +\infty$.

At this point from (\ref{hp:p-P}) we deduce that
\begin{eqnarray*}
p(U(0)) & \leq & 
p(U_{T}(0))+p(U(0)-U_{T}(0))  \\[0.5ex]
& \leq & \KPB^{0}\|F_{T}\|_{L^{\infty}(\re,\X)}+P\|U(0)-U_{T}(0)\|_{\X}  \\[0.5ex]
& \leq & \KPB^{0}\|F\|_{L^{\infty}(\re,\X)}+P\|U(0)-U_{T}(0)\|_{\X}.
\end{eqnarray*}

Letting $T\to +\infty$ we conclude that $\KGB^{0}\leq\KPB^{0}$, and hence also $\KGB\leq\KPB$.
\end{proof}

\begin{rmk}[Almost periodic forcing terms]
\begin{em}

In many applications one has to deal with almost periodic sources rather than general bounded ones. The importance of this special class has been underlined in many articles and specialized monographs devoted to wave phenomena, see for example~\cite{amerio-prouse, levitan-zhikov}. For this reason, one could introduce ``optimal almost periodic bounds'' in analogy with what we did in Definition~\ref{defn:BC}. Of course this notion would coincide with the other ones, since the class of almost periodic forcing terms is intermediate between the periodic and the bounded ones.

\end{em}
\end{rmk}

\begin{rmk}[Regular forcing terms]
\begin{em}

One could define optimal bounds by limiting oneself to forcing terms that are more regular, for example continuous or even of class $C^{\infty}$. Also with this restriction one ends up with the same constants. The reason is that solutions to (\ref{eqn:abstract}) depend in a continuous way on $F(t)$, in the sense that if $F_{n}(t)\to F_{\infty}(t)$ in $L^{1}_{\mathrm{loc}}$, then the sequence $U_{n}(t)$ of corresponding solutions converges to the limit solution $U_{\infty}(t)$ uniformly on compact time intervals.

\end{em}
\end{rmk}

%\clearpage

\subsection{Application to velocity bounds for second order equations}

In this subsection we specialize the abstract theory developed so far to the case of velocity estimates for solutions to (\ref{eqn:scalar}) and (\ref{eqn:vector}).

\paragraph{\textmd{\textit{The scalar equation}}}

Let us consider the scalar ordinary differential equation (\ref{eqn:scalar}). 
It is well known that this equation can be written as a first order system
\begin{equation}
\left(
\begin{array}{@{}c@{}}
u'(t) \\
v'(t) 
\end{array}
\right)+
\left(
\begin{array}{@{}cc@{}}
0 & -1 \\
b & c
\end{array}
\right)
\left(
\begin{array}{@{}c@{}}
u(t) \\
v(t) 
\end{array}
\right)=
\left(
\begin{array}{@{}c@{}}
0 \\
f(t) 
\end{array}
\right),
\nonumber
\end{equation}
and hence also as an abstract equation of the form (\ref{eqn:abstract}) with
\begin{equation}
\X:=\re^{2},
\qquad
\A:=\left(
\begin{array}{@{}cc@{}}
0 & -1 \\
b & c
\end{array}
\right),
\qquad
U(t):=\left(
\begin{array}{@{}c@{}}
u(t) \\
u'(t) 
\end{array}
\right),
\qquad
F(t):=\left(
\begin{array}{@{}c@{}}
0 \\
f(t) 
\end{array}
\right).
\nonumber
\end{equation}

We observe that $F(t)$ takes its values in the subspace $\Y:=\{0\}\times\re$. If we are interested in ultimate bounds on the velocity of the form
\begin{equation}
\limsup_{t\to +\infty}|u'(t)|\leq K\limsup_{t\to +\infty}|f(t)|.
\label{est:UB-scalar}
\end{equation}
or in global bounds of the form
\begin{equation}
|u'(t)|\leq K\|f(t)\|_{L^{\infty}(\re,\re)}
\qquad
\forall t\in\re,
\label{est:GB-scalar}
\end{equation}
then we can consider the seminorm $p$ in $\X$ defined by $p(u,v):=|v|$. In this way the common value $\operatorname{OB}(\A,\Y,p)$ of the constants of Theorem~\ref{thm:abstract} turns out to be the optimal constant for which (\ref{est:UB-scalar}) and (\ref{est:GB-scalar}) hold true. In particular it is the same for both estimates, and can be characterized in several different ways, as shown in Theorem~\ref{thm:abstract}.

\paragraph{\textmd{\textit{The vector equation}}}

Let $H$ be a (real) Hilbert space, and let $A$ be a self-adjoint linear operator on $H$ with dense domain $D(A)$, and satisfying the coercivity assumption (\ref{hp:A-coercive}). Let us consider equation (\ref{eqn:vector}), which can be written in the form
\begin{equation}
\left(
\begin{array}{@{}c@{}}
u'(t) \\
v'(t) 
\end{array}
\right)+
\left(
\begin{array}{@{}cc@{}}
0 & -I \\
A & cI
\end{array}
\right)
\left(
\begin{array}{@{}c@{}}
u(t) \\
v(t) 
\end{array}
\right)=
\left(
\begin{array}{@{}c@{}}
0 \\
f(t) 
\end{array}
\right),
\nonumber
\end{equation}
and hence also as an abstract equation of the form (\ref{eqn:abstract}) with
\begin{equation}
\X:=D(A^{1/2})\times H,
\qquad
\A:=\left(
\begin{array}{@{}cc@{}}
0 & -I \\
A & cI
\end{array}
\right),
\qquad
U(t):=\left(
\begin{array}{@{}c@{}}
u(t) \\
u'(t) 
\end{array}
\right),
\qquad
F(t):=\left(
\begin{array}{@{}c@{}}
0 \\
f(t) 
\end{array}
\right).
\nonumber
\end{equation}

In this setting mild solutions $U\in C^{0}([0,+\infty),\X)$ correspond to weak solutions in the class (\ref{defn:weak-sol}), and similarly for global solutions defined for every $t\in\re$.

We observe that $F(t)$ takes its values in the subspace $\Y:=\{0\}\times H$. If we consider in the phase space $\X$ the seminorm $p$ defined by $p(u,v):=\|v\|_{H}$, then the common value $\operatorname{OB}(\A,\Y,p)$ of the constants of Theorem~\ref{thm:abstract} is the optimal constant for which ultimate or global bounds on the velocity hold true.

\begin{rmk}[The role of the seminorm]
\begin{em}

Different choices of the seminorm $p$ lead to optimal bounds for different quantities. For example, in the scalar case the seminorm $p(u,v):=|u|$ leads to ultimate or global bounds on the solution $u(t)$, while in the vector case the seminorm $p(u,v):=(\|A^{1/2}u\|_{H}^{2}+\|v\|_{H}^{2})^{1/2}$ leads to ultimate or global bounds on the energy of solutions.

\end{em}
\end{rmk}

%\clearpage

\setcounter{equation}{0}
\section{Optimal velocity bounds in the scalar case}\label{sec:scalar}

In this section we consider the scalar equation (\ref{eqn:scalar}), and we investigate the exact value of the constant that appears in optimal velocity bounds of the form (\ref{est:UB-scalar}) and (\ref{est:GB-scalar}). Since (\ref{eqn:scalar}) can be solved almost explicitly, we can compute the exact value of this constant, which we denote by $K(b,c)$. We also investigate the monotonicity and decay properties of $K(b,c)$ that will guide our exploration of the infinite dimensional case in the following section.

\begin{thm}[Optimal velocity bounds in the scalar case]\label{thm:scalar}

Let us consider equation (\ref{eqn:scalar}), where $b$ and $c$ are two positive real numbers. Let us set
\begin{equation}
\Delta:=\left|1-\frac{4b}{c^{2}}\right|^{1/2},
\label{defn:Delta}
\end{equation}
and let $K(b,c)$ denote the constant that appears in the optimal velocity bounds.  

Then it turns out that
\begin{equation}
K(b,c)=\left\{
\begin{array}{l@{\qquad}l}
\dfrac{2}{\sqrt{b}}\cdot\left(\dfrac{1-\Delta}{1+\Delta}\right)^{1/(2\Delta)} & 
\mbox{if }4b<c^{2}, \\[3ex]
\dfrac{4}{ec} & 
\mbox{if }4b=c^{2}, \\[3ex]
\dfrac{2}{\sqrt{b}}\cdot\left\{1-\exp\left(-\dfrac{\pi}{\Delta}\right)\right\}^{-1}\cdot\exp\left(-\dfrac{\arctan\Delta}{\Delta}\right) & 
\mbox{if }4b>c^{2}.
\end{array}
\right.
\label{th:kbc}
\end{equation}

As a consequence, the function $K(b,c)$ has the following properties.
\begin{itemize}

\item  \emph{(Monotonicity in $b$).} For every $c>0$, the function $b\to K(b,c)$ is decreasing.

\item  \emph{(Monotonicity in $c$)}. For every $b>0$, the function $c\to K(b,c)$ is decreasing.

\item  \emph{(Upper and lower bound).} It turns out that
\begin{equation}
\frac{4}{\pi c}<K(b,c)<\frac{2}{c}
\qquad
\forall (b,c)\in(0,+\infty)^{2},
\label{th:2/c}
\end{equation}
and the constants 2 and $4/\pi$ are optimal.
\end{itemize}

\end{thm}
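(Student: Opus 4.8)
The plan is to reduce everything to an explicit extremal problem. By Theorem~\ref{thm:abstract}, the constant $K(b,c)$ equals the optimal global bound $\KGB$, so it suffices to determine the best constant $K$ such that the unique globally bounded solution of (\ref{eqn:scalar}) with $\|f\|_{L^\infty(\re,\re)}\le 1$ satisfies $|u'(t)|\le K$ for all $t$. Using the representation (\ref{th:mild-global}) and the second component of the semigroup $S(t)$ for the companion matrix $\A$, one has $u'(t)=\int_0^{+\infty} k(\tau)\,f(t-\tau)\,d\tau$ where $k(\tau)$ is the impulse response for the velocity, i.e. (up to sign) the derivative of the fundamental solution. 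Concretely $k(\tau)=e^{-c\tau/2}\bigl(\cos(\omega\tau)-\tfrac{c}{2\omega}\sin(\omega\tau)\bigr)$ in the oscillatory case with $\omega=\tfrac{\sqrt{4b-c^2}}{2}$, and the analogous expression with hyperbolic functions (or $(1-\tfrac{c}{2}\tau)e^{-c\tau/2}$ when $4b=c^2$) in the other cases. The supremum of $|u'(0)|$ over admissible $f$ is then exactly $\int_0^{+\infty}|k(\tau)|\,d\tau$, attained in the limit by $f(-\tau)=\operatorname{sign}k(\tau)$; this is the classical fact that the operator norm of convolution $L^\infty\to L^\infty$ is the $L^1$ norm of the kernel.

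So the core computation is to evaluate $\int_0^{+\infty}|k(\tau)|\,d\tau$ in each of the three regimes and check it matches (\ref{th:kbc}). First I would handle the non-oscillatory case $4b\ge c^2$: here one shows $k(\tau)$ changes sign at most once (at $\tau_0$ where $\tan(\omega\tau_0)=\tfrac{2\omega}{c}$, or $\tau_0=2/c$ in the critical case), so $\int_0^{+\infty}|k|=|\int_0^{\tau_0}k|+|\int_{\tau_0}^{+\infty}k|$ reduces to evaluating antiderivatives of $e^{-c\tau/2}\cos(\omega\tau)$ and $e^{-c\tau/2}\sin(\omega\tau)$ — elementary but bookkeeping-heavy, and the critical case is a clean limit giving $4/(ec)$. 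For the oscillatory case $4b>c^2$, $k(\tau)$ changes sign infinitely often, at points spaced by $\pi/\omega$; writing $\int_0^{+\infty}|k|$ as a geometric-type sum $\sum_n \int_{I_n}|k|$ over successive sign-intervals, each integral is a fixed multiple $e^{-c\pi n/(2\omega)}$ of the first one, so the series sums to a closed form involving $(1-e^{-c\pi/(2\omega)})^{-1}$. Substituting $\Delta=|1-4b/c^2|^{1/2}$ (so $2\omega/c=\Delta$ when $4b>c^2$, and $2\omega/c = i\Delta$-type relations in the other regime) and simplifying the trigonometric/hyperbolic constants should produce exactly the three expressions in (\ref{th:kbc}), with $\arctan\Delta$ coming from the first sign-change and $(1-\Delta)/(1+\Delta)$ from the ratios of exponentials at the two sign changes in the hyperbolic case.

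Once the formula (\ref{th:kbc}) is established, the monotonicity and bound claims become single-variable calculus. Note $K(b,c)$ depends on $(b,c)$ only through the combination appearing in $\Delta$ together with an overall $2/\sqrt b$ factor; setting $x:=2\sqrt b/c\in(0,+\infty)$ one can write $K(b,c)=\tfrac{2}{c}\,\varphi(x)$ for an explicit function $\varphi$ (piecewise, with $\varphi$ analytic across $x=1$), where $\varphi(x)=\bigl(\tfrac{1-\Delta}{1+\Delta}\bigr)^{1/(2\Delta)}$ for $x<1$, $\varphi(1)=2/e$, and $\varphi(x)=\{1-e^{-\pi/\Delta}\}^{-1}e^{-\arctan\Delta/\Delta}$ for $x>1$, with $\Delta=|1-x^2|^{1/2}$. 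Then: monotonicity in $c$ for fixed $b$ — since increasing $c$ decreases $x$, I would show $\tfrac{2}{c}\varphi(2\sqrt b/c)$ is decreasing in $c$, equivalently that $t\mapsto t\,\varphi(1/t)$ (or a suitable reparametrisation) is monotone; monotonicity in $b$ for fixed $c$ reduces to showing $\varphi(x)$ is decreasing in $x$ on $(0,+\infty)$, i.e. $\log\varphi$ has negative derivative — a computation involving $\tfrac{d}{d\Delta}\bigl[\tfrac{1}{2\Delta}\log\tfrac{1-\Delta}{1+\Delta}\bigr]$ on one side and $\tfrac{d}{d\Delta}\bigl[-\log(1-e^{-\pi/\Delta}) - \tfrac{\arctan\Delta}{\Delta}\bigr]$ on the other, each of which I expect to admit a sign after clearing denominators. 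Finally the bounds in (\ref{th:2/c}): the upper bound $K<2/c$ is $\varphi(x)<1$ for all $x$, and $\varphi(x)\to 1$ as $x\to 0^+$ (i.e. $b\to 0$) gives optimality of $2$; the lower bound $K>\tfrac{4}{\pi c}$ is $\varphi(x)>2/\pi$, with $\varphi(x)\to 2/\pi$ as $x\to+\infty$ (since $\Delta\to+\infty$, $\{1-e^{-\pi/\Delta}\}^{-1}\sim \Delta/\pi$ and $e^{-\arctan\Delta/\Delta}\to 1$, so $\varphi(x)\sim \tfrac{\Delta}{\pi}\cdot\tfrac{1}{?}$ — one must check this limit carefully, using $\arctan\Delta\to\pi/2$ so $e^{-\arctan\Delta/\Delta}\to 1$ and $1-e^{-\pi/\Delta}\sim\pi/\Delta$, giving $\varphi\to 1/\,$?; the clean value $2/\pi$ should emerge from tracking the $2/\sqrt b$ versus $2/c$ normalization correctly), establishing optimality of $4/\pi$.

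The main obstacle I anticipate is twofold. First, getting the three closed forms to come out exactly as in (\ref{th:kbc}) — the sign-analysis of $k(\tau)$ (showing it changes sign exactly once in the non-oscillatory regimes, and that the sign-intervals are genuinely of length $\pi/\omega$ in the oscillatory one) and the trigonometric simplification that collapses a sum of $\cos/\sin$ antiderivative evaluations into the compact $\exp(-\arctan\Delta/\Delta)$ form; a sign error or a missed sign-change would give the wrong constant. Second, and probably harder, the monotonicity in $b$: the function $\varphi(x)$ is defined by genuinely different analytic expressions on the two sides of $x=1$, and one must verify it is $C^1$ (indeed $C^\infty$) there and monotone on each piece, which requires showing that two separate, somewhat opaque, transcendental derivatives each keep a constant sign — this typically needs an auxiliary inequality (e.g. bounding $\log\frac{1-\Delta}{1+\Delta}$ or $\frac{\pi e^{-\pi/\Delta}}{1-e^{-\pi/\Delta}}$ by simpler rational expressions) rather than a direct sign read-off.
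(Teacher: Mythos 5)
Your overall strategy is exactly the paper's: reduce to $\KGB^{0}$ via Theorem~\ref{thm:abstract}, write $u'(0)$ as the convolution of the velocity impulse response $k$ against $f$, identify the optimal constant with $\int_0^{+\infty}|k(\tau)|\,d\tau$ (attained by $f(-\tau)=\operatorname{sign}k(\tau)$), and evaluate the integral in the three regimes by locating the sign changes of $k$. The monotonicity is then single-variable calculus on auxiliary functions. So the route is essentially the same.

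Two concrete issues in the write-up, however, would derail the second half if taken literally. First, a labelling slip: the non-oscillatory regime is $4b<c^{2}$ (real roots of $x^2+cx+b=0$), not ``$4b\ge c^{2}$'', and in that regime $k$ is a combination of real exponentials $e^{-\alpha s},e^{-\beta s}$ with a single sign change at $s_0=\frac{1}{\alpha-\beta}\log\frac{\alpha}{\beta}$, not a trigonometric expression with $\tan(\omega\tau_0)=2\omega/c$; it is the oscillatory regime $4b>c^{2}$ where the $\tan$/infinitely-many-crossings picture applies. Second, and more substantively, your reparametrisation $K(b,c)=\frac{2}{c}\varphi(x)$ with $x=2\sqrt b/c$ forces $\varphi(x)=\frac{c}{\sqrt b}\bigl(\tfrac{1-\Delta}{1+\Delta}\bigr)^{1/(2\Delta)}=\frac{2}{x}\bigl(\tfrac{1-\Delta}{1+\Delta}\bigr)^{1/(2\Delta)}$, etc.; your stated $\varphi$ drops the factor $\frac{2}{x}$. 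This is not just cosmetic: without the $2/x$, the function $\bigl(\tfrac{1-\Delta}{1+\Delta}\bigr)^{1/(2\Delta)}$ (with $\Delta=\sqrt{1-x^2}$) is \emph{increasing} in $x$ on $(0,1)$, so the ``show $\varphi$ is decreasing'' step would fail outright, and the limit as $x\to\infty$ also comes out wrong unless the $2/x\sim2/\Delta$ factor is there to cancel the $\Delta/\pi$ blow-up of $(1-e^{-\pi/\Delta})^{-1}$. You flag the normalization as uncertain, which is good, but it needs to be fixed, not deferred. The paper sidesteps this by never trying to prove monotonicity of a single $\varphi$: it proves separately that $b\mapsto c\cdot K(b,c)$ (i.e. $4\exp\{-\tfrac12\log(1-\Delta^2)+\tfrac{1}{2\Delta}\log\tfrac{1-\Delta}{1+\Delta}\}$) is decreasing in $b$, and that $c\mapsto\sqrt b\cdot K(b,c)$ (i.e. $2\bigl(\tfrac{1-\Delta}{1+\Delta}\bigr)^{1/(2\Delta)}$) is decreasing in $c$; these are different auxiliary functions, and the second one is precisely your (uncorrected) $\varphi$, but used for the \emph{other} monotonicity claim. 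If you adopt that split, the two derivative-sign checks become the elementary ones in the paper ($g_1,g_2$ in the hyperbolic regime, the $\theta$ convexity argument in the oscillatory regime), and the $2/\pi$ limit falls out cleanly.
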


\begin{rmk}\label{rmk:identity}
\begin{em}

The conclusions of Theorem~\ref{thm:scalar} hold true also if (\ref{eqn:scalar}) is interpreted as an evolution equation in a general Banach space $\X$. The proof relies on a standard duality argument. 

Let $\X'$ denote the dual of $\X$. For every $L\in\X'$ we consider the scalar functions $u_{*}(t):=L(u(t))$ and  $f_{*}(t):=L(f(t))$, and we observe that $u_{*}(t)$ solves a scalar equation of the form (\ref{eqn:scalar}) with forcing term $f_{*}(t)$, and therefore
\begin{equation}
|L(u'(0))|=
|u_{*}'(0)|\leq
K(b,c)\|f_{*}\|_{L^{\infty}(\re,\re)}\leq
K(b,c)\|f\|_{L^{\infty}(\re,\X)}\cdot\|L\|_{\X'}.
\nonumber
\end{equation}

Taking the supremum over all elements $L\in\X'$ with $\|L\|_{\X'}\leq 1$ we conclude that
\begin{equation}
\|u'(0)\|_{\X}\leq K(b,c)\|f\|_{L^{\infty}(\re,\X)}.
\nonumber
\end{equation}

Recalling the characterization of the optimal velocity bound as $\KGB^{0}$, this proves that the optimal velocity bound for solutions to (\ref{eqn:scalar}) in $\X$ is less than or equal to $K(b,c)$. 

The opposite inequality follows by considering ``simple modes'', namely solutions to (\ref{eqn:scalar}) in $\X$ of the form $v(t)=u(t)x_{0}$, where $x_{0}\in\X$ is any nonzero vector, and $u(t)$ is any solution to (\ref{eqn:scalar}) in $\re$.
 
\end{em}
\end{rmk}

%\clearpage

%\clearpage

\subsection{Proof of Theorem~\ref{thm:scalar} -- Computation of the optimal bound}

We use the characterization of $K(b,c)$ as $\KGB^{0}$, and we distinguish three cases according to the sign of $c^{2}-4b$, namely the discriminant of the characteristic equation
\begin{equation}
x^{2}+cx+b=0.
\label{eqn:char}
\end{equation}

\paragraph{\textmd{\textit{Non-oscillatory case}}}

When $4b<c^{2}$ we set
\begin{equation}
\alpha:=\frac{c+\sqrt{c^{2}-4b}}{2},
\qquad\qquad
\beta:=\frac{c-\sqrt{c^{2}-4b}}{2},
\nonumber
\end{equation}
so that the roots of the characteristic equation (\ref{eqn:char}) are the two negative real numbers $-\alpha$ and $-\beta$.

For every forcing term $f\in L^{\infty}(\re,\re)$, the unique solution to equation (\ref{eqn:scalar}) that is bounded for all (positive and negative) times is given by the formula
\begin{equation}
u(t):=\frac{1}{\alpha-\beta}\int_{0}^{+\infty}\left(-e^{-\alpha s}+e^{-\beta s}\right)f(t-s)\,ds
\qquad
\forall t\in\re.
\nonumber
\end{equation}

In particular it turns out that
\begin{equation}
u'(0)=\frac{1}{\alpha-\beta}\int_{0}^{+\infty}\left(\alpha e^{-\alpha s}-\beta e^{-\beta s}\right)f(-s)\,ds,
\label{formula:u'}
\end{equation}
and hence
\begin{equation}
|u'(0)|\leq\|f\|_{L^{\infty}(\re,\re)}\cdot\frac{1}{\alpha-\beta}\int_{0}^{+\infty}\left|\alpha e^{-\alpha s}-\beta e^{-\beta s}\right|\,ds.
\label{est:sup<}
\end{equation}

On the other hand, in the special case where
\begin{equation}
f(-s):=\operatorname{sign}\left(\alpha e^{-\alpha s}-\beta e^{-\beta s}\right)
\qquad
\forall s\in\re
\nonumber
\end{equation}
we find that
\begin{equation}
|u'(0)|=\|f\|_{L^{\infty}(\re,\re)}\cdot\frac{1}{\alpha-\beta}\int_{0}^{+\infty}\left|\alpha e^{-\alpha s}-\beta e^{-\beta s}\right|\,ds.
\label{est:sup>}
\end{equation}

This proves that $K(b,c)$ coincides with the constant that appears in right-hand side of both (\ref{est:sup<}) and (\ref{est:sup>}). In order to compute the integral, we observe that the integrand vanishes only in the point 
\begin{equation}
s_{0}:=\frac{1}{\alpha-\beta}\log\frac{\alpha}{\beta},
\label{defn:s-0}
\end{equation}
so that
\begin{eqnarray}
\int_{0}^{+\infty}\left|\alpha e^{-\alpha s}-\beta e^{-\beta s}\right|\,ds & = &
\int_{0}^{s_{0}}\left(\alpha e^{-\alpha s}-\beta e^{-\beta s}\right)\,ds-
\int_{s_{0}}^{+\infty}\left(\alpha e^{-\alpha s}-\beta e^{-\beta s}\right)\,ds 
\nonumber  \\
& = & \frac{2}{\alpha-\beta}\left(e^{-\beta s_{0}}-e^{-\alpha s_{0}}\right).
\nonumber
\end{eqnarray}

Now from (\ref{defn:s-0}) we obtain that
\begin{equation}
e^{-\beta s_{0}}-e^{-\alpha s_{0}}=
\left(\frac{\beta}{\alpha}\right)^{\beta/(\alpha-\beta)}-
\left(\frac{\beta}{\alpha}\right)^{\alpha/(\alpha-\beta)}=
\left(\frac{\beta}{\alpha}\right)^{(\alpha+\beta)/[2(\alpha-\beta)]}
\left(\sqrt{\frac{\alpha}{\beta}}-\sqrt{\frac{\beta}{\alpha}}\right),
\nonumber
\end{equation}
and therefore
\begin{equation}
\frac{2}{\alpha-\beta}\left(e^{-\beta s_{0}}-e^{-\alpha s_{0}}\right)=
\frac{2}{\sqrt{\alpha\beta}}\left(\frac{\beta}{\alpha}\right)^{(\alpha+\beta)/[2(\alpha-\beta)]}.
\nonumber
\end{equation}

Since
\begin{equation}
\alpha\beta=b,
\qquad\qquad
\alpha=\frac{c}{2}(1+\Delta),
\qquad\qquad
\beta=\frac{c}{2}(1-\Delta),
\nonumber
\end{equation}
we end up with the value given by (\ref{th:kbc}) in the case $4b<c^{2}$. 

\paragraph{\textmd{\textit{Critical case}}}

When $4b=c^{2}$ the characteristic equation (\ref{eqn:char}) has $-c/2$ as a root of multiplicity two. If $f\in L^{\infty}(\re,\re)$, equation (\ref{eqn:scalar}) admits a unique solution $u(t)$ that is globally bounded, and this solution is
\begin{equation}
u(t)=\int_{0}^{+\infty}se^{-cs/2}f(t-s)\,ds
\qquad
\forall t\in\re,
\nonumber
\end{equation}
so that
\begin{equation}
u'(0)=\int_{0}^{+\infty}e^{-cs/2}\left(1-\frac{cs}{2}\right)f(-s)\,ds.
\nonumber
\end{equation}

At this point the same argument of the non-oscillatory case shows that
\begin{equation}
K(b,c)=\int_{0}^{+\infty}e^{-cs/2}\left|1-\frac{cs}{2}\right|\,ds=\frac{4}{ec}.
\nonumber
\end{equation}

\paragraph{\textmd{\textit{Oscillatory case}}}

When $4b>c^{2}$ the characteristic equation (\ref{eqn:char}) has two complex conjugate roots of the form $-\gamma\pm\delta i$, where
\begin{equation}
\gamma:=\frac{c}{2},
\qquad\qquad
\delta:=\sqrt{b-\frac{c^{2}}{4}}.
\nonumber
\end{equation}

If $f\in L^{\infty}(\re,\re)$, equation (\ref{eqn:scalar}) admits a unique solution $u(t)$ that is globally bounded, and this solution is
\begin{equation}
u(t)=\frac{1}{\delta}\int_{0}^{+\infty}e^{-\gamma s}\sin(\delta s)f(t-s)\,ds
\qquad
\forall t\in\re,
\nonumber
\end{equation}
so that
\begin{equation}
u'(0)=\int_{0}^{+\infty}e^{-\gamma s}\left(\cos\left(\delta s\right)-\frac{\gamma}{\delta}\sin\left(\delta s\right)\right)f(-s)\,ds.
\nonumber
\end{equation}

Arguing again as in the non-oscillatory case, we obtain that
\begin{eqnarray*}
K(b,c) & = &  
\int_{0}^{+\infty}e^{-\gamma s}\left|\cos\left(\delta s\right)-\frac{\gamma}{\delta}\sin\left(\delta s\right)\right|\,ds  \\[1ex]
& = & 
\frac{1}{\delta}\int_{0}^{+\infty}e^{-\gamma y/\delta}\left|\cos y -\frac{\gamma}{\delta}\sin  y\right|\,dy.
\end{eqnarray*}

In order to compute the last integral, we call $g(y)$ the function inside the absolute value. Since $g(y)$ is $\pi$-periodic, we obtain that
\begin{eqnarray*}
\int_{0}^{+\infty}e^{-\gamma y/\delta}|g(y)|\,dy & = &
\sum_{k=0}^{\infty}\int_{k\pi}^{(k+1)\pi}e^{-\gamma y/\delta}|g(y)|\,dy  \\
& = & \left(\sum_{k=0}^{\infty}e^{-k\pi\gamma/\delta}\right)\int_{0}^{\pi}e^{-\gamma y/\delta}|g(y)|\,dy  \\
& = & \left(1-e^{-\pi\gamma/\delta}\right)^{-1}\int_{0}^{\pi}e^{-\gamma y/\delta}|g(y)|\,dy.
\end{eqnarray*}

Now we observe that $g(y)$, in the interval $(0,\pi)$, vanishes only in $y_{0}:=\arctan(\delta/\gamma)$, and
\begin{equation}
\int_{0}^{\pi}e^{-\gamma y/\delta}|g(y)|\,dy=
\int_{0}^{y_{0}}e^{-\gamma y/\delta}g(y)\,dy-
\int_{y_{0}}^{\pi}e^{-\gamma y/\delta}g(y)\,dy=
2\sin(y_{0})e^{-\gamma y_{0}/\delta}.
\nonumber
\end{equation}

Since
\begin{equation}
\frac{\gamma}{\delta}=\frac{1}{\Delta},
\qquad\qquad
y_{0}=\arctan\Delta,
\qquad\qquad
\delta=\frac{c\Delta}{2},
\nonumber
\end{equation}
and
\begin{equation}
\sin(y_{0})=\frac{\tan(y_{0})}{(1+\tan^{2}(y_{0}))^{1/2}}=\frac{\Delta}{(1+\Delta^{2})^{1/2}}=\frac{c\Delta}{2\sqrt{b}},
\nonumber
\end{equation}
we obtain the value of $K(b,c)$ given by (\ref{th:kbc}) in the case $4b>c^{2}$.

%\clearpage

\subsection{Proof of Theorem~\ref{thm:scalar} -- Properties of the optimal bound}

From the explicit expression (\ref{th:kbc}), it is a calculus exercise to show that, for every $c>0$,
\begin{equation}
\lim_{b\to +\infty}K(b,c)=\frac{4}{\pi c}
\qquad\qquad\mbox{and}\qquad\qquad
\lim_{b\to 0^{+}}K(b,c)=\frac{2}{c}.
\nonumber
\end{equation}

Therefore, if we show that $K(b,c)$ is monotone with respect to $b$, this is enough to prove both (\ref{th:2/c}) and the optimality of the constants 2 and $4/\pi$.

Analogously, it is possible to show that $K(b,c)$ is continuous with respect to $b$ and with respect to $c$ in $(0,+\infty)^{2}$, since again the only nontrivial thing to check is the limit in the points with $4b=c^{2}$. 

Once we know the continuity, we can limit ourselves to show the monotonicity, both with respect to $b$ and with respect to $c$, in the two regions $0<4b<c^{2}$ and $4b>c^{2}$. For practical reasons, what we actually show is the monotonicity with respect to $b$ of the function $b\to c\cdot K(b,c)$, and the monotonicity with respect to $c$ of the function $c\to \sqrt{b}\cdot K(b,c)$.

\paragraph{\textmd{\textit{Monotonicity with respect to $b$ in the non-oscillatory regime}}}

When $c^{2}>4b$ it turns out that
\begin{eqnarray*}
c\cdot K(b,c) & = &
\frac{4}{(1-\Delta^{2})^{1/2}}\left(\frac{1-\Delta}{1+\Delta}\right)^{1/(2\Delta)} \\[1ex]
& = &
4\exp\left\{-\frac{1}{2}\log(1-\Delta^{2})+\frac{1}{2\Delta}\log\left(\frac{1-\Delta}{1+\Delta}\right)\right\},
\end{eqnarray*}
where $\Delta$ is defined by (\ref{defn:Delta}). When $b$ increases from 0 to $c^{2}/4$, the value of $\Delta$ decreases from $1$ to 0. Therefore, the function $b\to c\cdot K(b,c)$ is decreasing with respect to $b$ if and only if the function
\begin{equation}
f_{1}(x):=\frac{1}{x}\log\left(\frac{1-x}{1+x}\right)-\log(1-x^{2})
\nonumber
\end{equation}
is increasing with respect to $x$ in the interval $(0,1)$. Now let us set
\begin{equation}
g_{1}(x):=2x+\log\left(\frac{1-x}{1+x}\right).
\nonumber
\end{equation}

This function satisfies $g_{1}(0)=0$ and
\begin{equation}
g_{1}'(x)=\frac{2x^{2}}{x^{2}-1}<0
\qquad
\forall x\in(0,1),
\nonumber
\end{equation}
and therefore $g_{1}(x)<0$ for every $x\in(0,1)$. At this point we can conclude that
\begin{equation}
f_{1}'(x)=-\frac{g_{1}(x)}{x^{2}}>0
\qquad
\forall x\in (0,1),
\nonumber
\end{equation}
as required.

\paragraph{\textmd{\textit{Monotonicity with respect to $c$ in the non-oscillatory regime}}}

When $c$ increases from $2\sqrt{b}$ to $+\infty$, the value of $\Delta$ increases from 0 to 1. Since in this regime it turns out that
\begin{equation}
\sqrt{b}\cdot K(b,c)=2\left(\frac{1-\Delta}{1+\Delta}\right)^{1/(2\Delta)},
\nonumber
\end{equation}
we need to show that the function 
\begin{equation}
f_{2}(x):=\frac{1}{x}\log\left(\frac{1-x}{1+x}\right)
\nonumber
\end{equation}
is decreasing with respect to $x$ in the interval $(0,1)$. Now let us set
\begin{equation}
g_{2}(x):=\frac{2x}{x^{2}-1}-\log\left(\frac{1-x}{1+x}\right).
\nonumber
\end{equation}

This function satisfies $g_{2}(0)=0$ and
\begin{equation}
g_{2}'(x)=-\frac{4x^{2}}{(x^{2}-1)^{2}}<0
\qquad
\forall x\in(0,1),
\nonumber
\end{equation}
and therefore $g_{2}(x)<0$ for every $x\in(0,1)$. At this point we can conclude that
\begin{equation}
f_{2}'(x)=\frac{g_{2}(x)}{x^{2}}<0
\qquad
\forall x\in (0,1),
\nonumber
\end{equation}
as required.

\paragraph{\textmd{\textit{Monotonicity with respect to $b$ in the oscillatory regime}}}

Here we follow the argument introduced in~\cite{giraudo:tesi}. When $c^{2}<4b$ it turns out that $c\cdot K(b,c) =\varphi_{1}(\Delta)$, where
\begin{equation}
\varphi_{1}(x) :=
\frac{4}{(1+x^{2})^{1/2}}\exp\left(-\frac{\arctan x}{x}\right)\cdot\left\{1-\exp\left(-\frac{\pi}{x}\right)\right\}^{-1}.
\nonumber
\end{equation}

When $b$ increases from $c^{2}/4$ to $+\infty$, the value of $\Delta$ increases from $0$ to $+\infty$. Therefore, the function $b\to c\cdot K(b,c)$ is decreasing with respect to $b$ if and only if the function $\varphi_{1}(x)$ is decreasing with respect to $x$ in the half-line $(0,+\infty)$. A long but elementary computation gives that
\begin{equation}
\varphi_{1}'(x)=-4\psi_{1}(x)\cdot\left\{(e^{\pi/x}-1)(x-\arctan x)-\pi\right\}
\qquad
\forall x>0,
\nonumber
\end{equation}
where
\begin{equation}
\psi_{1}(x):=\exp\left(\frac{\pi-\arctan x}{x}\right)(e^{\pi/x}-1)^{-2}x^{-2}(x^{2}+1)^{-1/2}
\label{defn:psi-1}
\end{equation}
is a positive function. Therefore, $\varphi_{1}(x)$ is decreasing in $(0,+\infty)$ if and only if
\begin{equation}
(e^{\pi/x}-1)(x-\arctan x)>\pi
\qquad
\forall x>0.
\nonumber
\end{equation}

Setting $y:=\pi/x$ this inequality can be rewritten in the equivalent form
\begin{equation}
\frac{1}{\pi}\arctan\frac{\pi}{y}+\frac{1}{e^{y}-1}-\frac{1}{y}<0
\qquad
\forall y>0.
\label{est:giraudo-1}
\end{equation}

The second term in the left-hand side can be estimated from above by replacing $e^{y}$ with its Taylor polynomial of order four. In this way (\ref{est:giraudo-1}) is proved if we show that
\begin{equation}
\gamma(y):=\frac{1}{\pi}\arctan\frac{\pi}{y}+\frac{1}{y+\frac{y^{2}}{2}+\frac{y^{3}}{6}+\frac{y^{4}}{24}}-\frac{1}{y}<0
\qquad
\forall y>0.
\nonumber
\end{equation}

We observe that $\gamma(y)\to 0$ both when $y\to 0^{+}$ and when $y\to +\infty$. Therefore, it is enough to show that there exists $y_{0}>0$ such that $\gamma'(y)<0$ for every $y\in(0,y_{0})$ and $\gamma'(y)>0$ for every $y>y_{0}$. Another long but elementary computation shows that
\begin{equation}
\gamma'(y)=\frac{\pi^{2}(y^{4}+8y^{3}+40y^{2}+48y+48)-96(y^{3}+3y^{2}+6y+6)}{(y^{2}+\pi^{2})(y^{3}+4y^{2}+12y+24)^{2}}.
\nonumber
\end{equation}

The sign of $\gamma'(y)$ depends only on the sign of the numerator, which we denote by $\theta(y)$. Now we observe that $\theta(0)=48(\pi^{2}-12)<0$, and $\theta(y)>0$ when $y$ is large enough. Moreover, the second derivative
\begin{equation}
\theta''(y)=12\pi^{2}y^{2}+48(\pi^{2}-12)y+16(5\pi^{2}-36)
\nonumber
\end{equation}
is a polynomial of degree two with negative discriminant and positive leading coefficient. It follows that $\theta(y)$ is a convex function, and hence its sign switches from negative to positive at exactly one point $y_{0}>0$, as required.

\paragraph{\textmd{\textit{Monotonicity with respect to $c$ in the oscillatory regime}}}

When $c^{2}<4b$ it turns out that $\sqrt{b}\cdot K(b,c) =2\varphi_{2}(\Delta)$, where
\begin{equation}
\varphi_{2}(x) :=
\exp\left(-\frac{\arctan x}{x}\right)\cdot\left\{1-\exp\left(-\frac{\pi}{x}\right)\right\}^{-1}.
\nonumber
\end{equation}

When $c$ increases from $0$ to $2\sqrt{b}$, the value of $\Delta$ decreases from $+\infty$ to 0. Therefore, we need to show that the function $\varphi_{2}(x)$ is increasing with respect to $x$ in the half-line $(0,+\infty)$. Computing the derivative we find that
\begin{equation}
\varphi_{2}'(x)=\psi_{1}(x)\cdot(x^{2}+1)^{-1/2}\cdot\psi_{2}(x)
\qquad
\forall x>0,
\nonumber
\end{equation}
where $\psi_{1}(x)$ is the positive function defined in (\ref{defn:psi-1}), and
\begin{equation}
\psi_{2}(x):=e^{\pi/x}\left((x^{2}+1)\arctan x-x\right)+(x-\arctan x)+x^{2}(\pi-\arctan x)+\pi.
\nonumber
\end{equation}

At this point it is enough to verify that $\psi_{3}(x):=(x^{2}+1)\arctan x-x$ is positive for $x>0$, and this is true because $\psi_{3}(0)=0$ and $\psi_{3}'(x)=2x\arctan x>0$ for every $x>0$.

%\clearpage

\setcounter{equation}{0}
\section{Optimal velocity bounds in the vector case}\label{sec:vector}

In this section we consider the vector equation (\ref{eqn:vector}), and we investigate the optimal constant, which now we call $K(A,c)$, involved in optimal ultimate bounds on the velocity  such as (\ref{th:alain-pde-u'}). As shown in section~\ref{sec:equivalence}, this constant can be characterized in several equivalent ways, including optimal global bounds on the velocity of the unique global solution that is bounded on the whole real line.  

The first result of this section is an upper bound for $K(A,c)$ that improves (\ref{th:alain-pde-u'}), at least when the damping is a positive multiple of the velocity. We point out that this upper bound tends to~0 as $c\to +\infty$.

\begin{thm}[Upper estimates for optimal velocity bounds]\label{thm:KcA-upper}

Let $H$ be a separable Hilbert space, let $A$ be a self-adjoint linear operator on $H$ with dense domain $D(A)$, and satisfying the coercivity assumption (\ref{hp:A-coercive}) for some positive real number $b$, and let $c$ be a positive number. Let $K(A,c)$ denote the constant that appears in the optimal velocity bounds for solutions to equation (\ref{eqn:vector}).

Then the following statements hold true.
\begin{enumerate}
\renewcommand{\labelenumi}{(\arabic{enumi})}

\item  \emph{(General case).}  Without any further restriction on $H$ and $A$ it turns out that
\begin{equation}
K(A,c)\leq\left\{
\begin{array}{l@{\qquad}l}
\dfrac{4}{c} & \mbox{if }c^{2}\leq 4b, \\[3ex]
\dfrac{4}{c}\sqrt{\log_{2}\left(\dfrac{c^{2}}{b}\right)} & \mbox{if }c^{2}> 4b.
\end{array}\right.
\label{th:KcA-upper}
\end{equation}

\item  \emph{(Finite dimensional case).}  If in addition the dimension of $H$ is a positive integer $d$, and hence $A$ is a positive symmetric $d\times d$ matrix, then it turns out that
\begin{equation}
K(A,c)<\frac{2}{c}\sqrt{d}
\qquad
\forall c>0.
\nonumber
\end{equation}

\end{enumerate}

\end{thm}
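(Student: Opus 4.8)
The plan is to reduce the vector problem to the scalar one by means of the spectral theorem, and then to estimate the resulting scalar quantity carefully. Since $A$ is self-adjoint and coercive, we may decompose $H$ through the spectral measure of $A$ (in the finite dimensional case, through an orthonormal basis of eigenvectors). If $f\in L^{\infty}(\re,H)$ is a forcing term and $u(t)$ the unique globally bounded solution to~\eqref{eqn:vector}, then projecting onto the spectral component associated with a value $\lambda\ge b$ of $A$ yields a scalar equation of the form~\eqref{eqn:scalar} with $b$ replaced by $\lambda$ and the same $c$. By Theorem~\ref{thm:scalar}, applied to each such component (and, in the Banach-valued form, via the duality argument of Remark~\ref{rmk:identity}), the scalar velocity is controlled by $K(\lambda,c)$ times the sup-norm of the corresponding component of $f$. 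The subtlety is that summing these estimates over the spectrum does \emph{not} directly give $\sup_{\lambda\ge b}K(\lambda,c)$ as the vector constant, because the worst scalar forcing terms for different $\lambda$ are not the same function; one has to be more quantitative.

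The key computational step is therefore to bound, for a general $f\in L^{\infty}(\re,H)$ with $\|f\|_{L^{\infty}(\re,H)}\le 1$, the quantity $\|u'(0)\|_{H}^{2}$ by an integral kernel estimate. Writing $u'(0)$ via the representation formulas from the proof of Theorem~\ref{thm:scalar} applied spectrally, one gets $u'(0)=\int_{0}^{+\infty}\Phi(s)f(-s)\,ds$ where $\Phi(s)$ is the operator obtained from the scalar kernel $e^{-\gamma s}(\cos(\delta s)-(\gamma/\delta)\sin(\delta s))$ (oscillatory case) or its non-oscillatory analogue, with $\gamma=c/2$ and $\delta=\delta(\lambda)$ depending on the spectral parameter. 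Using Cauchy--Schwarz in a weighted way — introducing a weight $w(s)>0$ with $\int_{0}^{\infty}w(s)^{-1}\,ds<\infty$ — one obtains
\begin{equation}
\|u'(0)\|_{H}^{2}\le\left(\int_{0}^{+\infty}\frac{ds}{w(s)}\right)\int_{0}^{+\infty}w(s)\,\|\Phi(s)f(-s)\|_{H}^{2}\,ds,
\nonumber
\end{equation}
and the inner integral is estimated by $\sup_{\lambda\ge b}\int_{0}^{+\infty}w(s)\,\phi_{\lambda}(s)^{2}\,ds$ where $\phi_{\lambda}$ is the scalar kernel. Optimizing over the weight $w$ — a natural choice being $w(s)=e^{\mu s}$ for a suitable $\mu\in(0,2\gamma)$, or a piecewise constant weight adapted to the frequency $\delta(\lambda)$ — is what produces the $\sqrt{\log_{2}(c^{2}/b)}$ factor in the overcritical regime: the logarithm counts the number of dyadic frequency scales one must handle between the smallest frequency ($\lambda=b$) and the largest relevant one. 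In the undercritical regime $c^{2}\le 4b$ all frequencies are ``slow'' and the crude bound $\|\Phi(s)\|\le 2e^{-cs/2}$ integrates to $4/c$ directly.

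For the finite dimensional refinement, instead of a spectral-scale decomposition one simply sums over the $d$ eigenvalues. Writing $f=\sum_{k=1}^{d}f_{k}(t)e_{k}$ in an eigenbasis $\{e_{k}\}$, one has $u'(0)=\sum_{k=1}^{d}u_{k}'(0)e_{k}$ with each $u_{k}$ solving a scalar equation with elastic constant $\lambda_{k}\ge b$, so $|u_{k}'(0)|\le K(\lambda_{k},c)\|f_{k}\|_{L^{\infty}(\re,\re)}\le K(\lambda_{k},c)$ by Theorem~\ref{thm:scalar}; hence
\begin{equation}
\|u'(0)\|_{H}^{2}=\sum_{k=1}^{d}|u_{k}'(0)|^{2}\le\sum_{k=1}^{d}K(\lambda_{k},c)^{2}<\sum_{k=1}^{d}\frac{4}{c^{2}}=\frac{4d}{c^{2}},
\nonumber
\end{equation}
using the strict bound $K(\lambda,c)<2/c$ from~\eqref{th:2/c}, and taking square roots and the characterization $K(A,c)=\KGB^{0}$ gives $K(A,c)<2\sqrt{d}/c$. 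The main obstacle is the infinite dimensional part: showing that the per-frequency estimates can be bundled with only a logarithmic loss, i.e. choosing the weight $w$ (or the dyadic frequency partition) so that the total is $O((\log c)/c^{2})$ rather than growing like the number of modes. The finite dimensional case is then essentially immediate.
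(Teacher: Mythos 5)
Your treatment of statement~(2) is essentially the paper's: diagonalize $A$, apply the scalar bound $K(\lambda_k,c)<2/c$ from~(\ref{th:2/c}) to each component, and sum in $\ell^2$. That part is correct.

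The weighted Cauchy--Schwarz plan for statement~(1) has a genuine gap, and in fact cannot produce the factor $\sqrt{\log_2(c^2/b)}$ even if the gap is repaired. First, the step ``the inner integral is estimated by $\sup_{\lambda\geq b}\int_0^{\infty}w(s)\,\phi_{\lambda}(s)^{2}\,ds$'' is unjustified: writing $\|\Phi(s)f(-s)\|_{H}^{2}=\sum_{i}\phi_{\lambda_{i}}(s)^{2}|f_{i}(-s)|^{2}$, the only bound available using $\|f\|_{L^{\infty}(\re,H)}$ alone is
\begin{equation}
\int_{0}^{\infty}w(s)\|\Phi(s)f(-s)\|_{H}^{2}\,ds
\;\leq\;
\|f\|_{L^{\infty}(\re,H)}^{2}\int_{0}^{\infty}w(s)\Bigl(\sup_{\lambda\geq b}\phi_{\lambda}(s)^{2}\Bigr)\,ds,
\nonumber
\end{equation}
with the supremum \emph{inside} the integral; the alternative route $\sum_{i}\|f_{i}\|_{L^{\infty}}^{2}\int w\phi_{\lambda_{i}}^{2}$ fails because $\sum_{i}\|f_{i}\|_{L^{\infty}}^{2}$ is not controlled by $\|f\|_{L^{\infty}(\re,H)}^{2}$ (it can be infinite). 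Second, and more seriously, once the estimate is corrected in this way, the whole weighted Cauchy--Schwarz device is at best equivalent to the elementary bound $\|u'(0)\|_{H}\leq\|f\|_{L^{\infty}}\int_{0}^{\infty}\|\Phi(s)\|_{\mathcal{L}(H)}\,ds$, since for any weight $w$ one has $\bigl(\int ds/w\bigr)^{1/2}\bigl(\int w\|\Phi\|^{2}\bigr)^{1/2}\geq\int\|\Phi\|\,ds$ by Cauchy--Schwarz. And $\int_{0}^{\infty}\sup_{\lambda\geq b}|\phi_{\lambda}(s)|\,ds$ is of order $\log(c^{2}/b)/c$, not $\sqrt{\log(c^{2}/b)}/c$: for $1/c\lesssim s\lesssim c/b$ the supremum over $\lambda$ of $|\phi_{\lambda}(s)|$ is of order $1/(cs)$ (achieved near $\lambda\approx c/s$), and integrating $1/(cs)$ over that range already produces the full logarithm. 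Thus a weight in the time variable alone cannot recover the square root: the gain must come from orthogonality in $H$, not from Cauchy--Schwarz in $s$.

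The paper's argument does exactly that. It decomposes $H$ into $O(\log_{2}(c^{2}/b))$ mutually orthogonal $A$-invariant subspaces $H_{j}$, corresponding to the dyadic spectral bands $[2^{j}b,2^{j+1}b)$ together with a top band $[2^{k+1}b,+\infty)$ where the coercivity constant already exceeds $c^{2}/4$. On each middle band it proves a lemma giving $\limsup\|u_{j}'\|_{H}\leq\frac{1}{c}(1+\sqrt{3M/m})\limsup\|f\|_{H}$ with $M/m=2$, by combining the $\|A^{1/2}u\|$-estimate of Theorem~\ref{thmbibl:alain-PDE} with the rewriting $u''+cu'=f-Au$ viewed as a first-order equation for $u'$; the top band is handled directly by Theorem~\ref{thmbibl:alain-PDE}. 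Each band then contributes at most $4/c$, and summing in $\ell^{2}$ over the $k+2\leq\log_{2}(c^{2}/b)$ orthogonal pieces gives the $\sqrt{\log_{2}(c^{2}/b)}$ factor. Your intuition that ``the logarithm counts the number of dyadic frequency scales'' is correct, but the decomposition has to be performed on the Hilbert space via the spectral theorem, not via a scalar weight in the convolution variable.
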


The second result of this section provides some lower bounds for $K(A,c)$, and shows that Theorem~\ref{thm:KcA-upper} is somewhat optimal. In particular, we prove that the factor $\sqrt{d}$ is essential in the finite dimensional case, that a decay of $K(A,c)$ of order $O(1/c)$ is impossible in the infinite dimensional case if $A$ is unbounded, and that the correction of order $(\log c)^{1/2}$ is essential when the sequence of eigenvalues of $A$ grows at most exponentially.

For the sake of simplicity, in the statement and in the proof we assume that $H$ admits an orthonormal basis of eigenvectors of $A$, but the result can be extended to general self-adjoint coercive operators by exploiting that they are unitary isomorphic to multiplication operators in suitable $L^{2}$ spaces (see for example~\cite[Theorem~VIII.4]{reed-simon}). We spare the reader this generality that only complicates proofs without introducing new ideas.

%\clearpage

\begin{thm}[Lower estimates for boundedness constants]\label{thm:KcA-lower}

Let $H$ be a separable Hilbert space, and let $\{e_{i}\}_{i\in I}$ be an orthonormal basis of $H$, with either $I=\n$ or $I=\{1,\ldots,d\}$ for some integer $d\geq 1$. Let $\{\lambda_{i}\}_{i\in I}$ be a nondecreasing (finite or countable) sequence of positive real numbers, and let $A$ be the self-adjoint operator on $H$ such that
\begin{equation}
Ae_{i}=\lambda_{i}e_{i}
\qquad
\forall i\in I.
\nonumber
\end{equation}

Let $c$ be a positive real number, and let $K(A,c)$ denote the constant that appears in the optimal velocity bounds for solutions to equation (\ref{eqn:vector}).

Then the following statements hold true.
\begin{enumerate}
\renewcommand{\labelenumi}{(\arabic{enumi})}

\item  \emph{(General lower bound).} Without any further assumption it turns out that
\begin{equation}
K(A,c)\geq K(\lambda_{1},c)>\frac{4}{\pi c}
\qquad
\forall c>0,
\label{th:lower-basic}
\end{equation}
where $\lambda_{1}$ is the smallest eigenvalue of $A$, and $K(\lambda_{1},c)$ is the optimal velocity bound of the scalar case given by (\ref{th:kbc}). The constant $4/\pi$ in this lower bound is optimal.

\item  \emph{(Finite dimensional case).} Let us assume that the dimension of $H$ is a positive integer $d$. Then for every real number $\delta>0$ there exists a positive symmetric $d\times d$ matrix $A$ such that
\begin{equation}
K(A,c)\geq\frac{1-\delta}{c}\sqrt{d+3}
\nonumber
\end{equation}
when $c$ is large enough.

\item  \emph{(Unbounded operators).} Let us assume that $I=\n$ and $\lambda_{i}\to +\infty$ as $i\to +\infty$ (and therefore $A$ is an unbounded operator). Then it turns out that
\begin{equation}
\lim_{c\to +\infty}c\cdot K(A,c)=+\infty.
\label{th:lower-lim}
\end{equation}

\item  \emph{(Operators with eigenvalues growing at most exponentially).} There exists a real number $R_{0}>1$ with the following property.  If $I=\n$, and for some real number $R\geq R_{0}$ the sequence $\{\lambda_{n}\}_{n\in\n}$ admits a subsequence (not relabeled) such that
\begin{equation}
R_{0}\leq\frac{\lambda_{n+1}}{\lambda_{n}}\leq R
\qquad
\forall n\geq 1,
\label{hp:6R}
\end{equation}
then it turns out that
\begin{equation}
K(A,c)\geq\frac{1}{2(\log_{2}R)^{1/2}}\cdot\frac{(\log_{2}c)^{1/2}}{c}
\nonumber
\end{equation}
when $c$ is large enough.

\end{enumerate}

\end{thm}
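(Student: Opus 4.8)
The unifying idea is to reduce everything to the scalar bound $K(\lambda,c)$ via the ``simple modes'' of Remark~\ref{rmk:identity}, and then to build good test sources by \emph{superposing} finitely many modes with carefully separated frequencies. For a single eigenvalue $\lambda_i$, the solution driven by $f(t)e_i$ with $u_i(t)$ solving $u_i''+cu_i'+\lambda_i u_i=f$ produces a contribution to $u'(0)$ of the form $\int_0^{+\infty} h_i(s)f(-s)\,ds$, where in the oscillatory regime $\lambda_i>c^2/4$ one has $h_i(s)=e^{-\gamma s}(\cos(\delta_i s)-\tfrac{\gamma}{\delta_i}\sin(\delta_i s))$ with $\gamma=c/2$ and $\delta_i=\sqrt{\lambda_i-c^2/4}$, essentially as in the proof of Theorem~\ref{thm:scalar}. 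If one drives $N$ distinct modes simultaneously with the same scalar $f$, then $\|u'(0)\|_H^2=\sum_i(\int_0^\infty h_i(s)f(-s)\,ds)^2$, and choosing $f(-s)=\operatorname{sign}$ of a suitable combination (or, more robustly, testing against the right vector) shows $K(A,c)\ge$ the operator norm of the map $f\mapsto(\int h_i f)_i$ from $L^\infty$ to $\ell^2$, which by duality equals $\big(\int_0^{+\infty}\Vert(h_1(s),\dots,h_N(s))\Vert_{\ell^2}^{?}\big)$-type quantities; the clean lower bound is $K(A,c)\ge\sup_{\Vert a\Vert_{\ell^2}\le 1}\int_0^{+\infty}|\sum_i a_i h_i(s)|\,ds$.

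\emph{Item (1)} is immediate: restrict to the single mode $e_1$ to get $K(A,c)\ge K(\lambda_1,c)$, then invoke the lower bound $K(b,c)>4/(\pi c)$ from (\ref{th:2/c}), whose optimality was already established in Theorem~\ref{thm:scalar}.

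\emph{Item (2) and item (4)} are the heart of the matter and share a mechanism: choose the $d$ (resp.\ a growing number of) eigenvalues so that the oscillation frequencies $\delta_i$ are geometrically spaced, $\delta_{i+1}/\delta_i$ large. Then the functions $h_i(s)$ ``live'' on essentially disjoint time scales: $h_i$ oscillates with period $\sim 1/\delta_i$ while the exponential envelope $e^{-cs/2}$ decays on the much longer scale $2/c$. On the window $s\in(0,\text{few periods of }\delta_i)$ the factor $e^{-cs/2}$ is $\approx 1$, and integrating $|h_i|\approx|\cos(\delta_i s)-\tfrac{\gamma}{\delta_i}\sin(\delta_i s)|$ over that window contributes a definite amount; when $\delta_i\gg\gamma$ this per-mode contribution is close to $\tfrac{2}{c}$ times a universal constant (matching the $4/(\pi c)$ heuristic, with the extra bit coming from the first period where $\cos$ has not yet decayed). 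Because the active windows of different modes are nearly disjoint, one can pick signs of $f$ mode by mode and \emph{add} the contributions almost without loss, so $\Vert u'(0)\Vert_H\gtrsim\sqrt{N}\cdot\tfrac{c_0}{c}$ with $\Vert f\Vert_\infty=1$. Optimizing the bookkeeping on the first window (where one genuinely gains the ``$+3$'' inside $\sqrt{d+3}$, from the $\int_0^{y_0}$ part where $\cos$ dominates) gives item (2). For item (4), the hypothesis (\ref{hp:6R}) that consecutive (sub)eigenvalues satisfy $R_0\le\lambda_{n+1}/\lambda_n\le R$ translates, for $c$ large (so that all relevant $\lambda_n$ in a dyadic band $[c^2,?]$ are in the oscillatory-ish regime relative to smaller ones but we only need $\delta_n\approx\sqrt{\lambda_n}$), into being able to fit $\sim\log_2(c)/\log_2(R)$ geometrically-separated frequencies below the scale set by $c$; collecting $N\sim\tfrac12\log_2(c)/\log_2(R)$ such modes yields $K(A,c)\gtrsim\sqrt{N}/c\sim\tfrac{1}{2(\log_2 R)^{1/2}}\tfrac{(\log_2 c)^{1/2}}{c}$.

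\emph{Item (3)} follows from (4) together with a compactness/exhaustion argument: if $\lambda_i\to+\infty$, then for any $R$ one can extract a subsequence with ratios eventually $\ge R$ (and one may further thin to also respect the upper bound $\le R'$ for some finite $R'$, or simply drop that requirement and argue directly that geometric separation only helps), so the construction of (4) gives, for each fixed $R$, the bound $c\,K(A,c)\ge\tfrac{1}{2(\log_2 R)^{1/2}}(\log_2 c)^{1/2}\to+\infty$ as $c\to+\infty$; since this holds along a sequence of $c$'s for every $R$, one gets $\liminf_{c\to\infty}c\,K(A,c)=+\infty$, and monotonicity-type comparison (adding modes never decreases the optimal constant) upgrades $\liminf$ to $\lim$.

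\textbf{Main obstacle.} The delicate point is making rigorous the claim that the modes contribute ``almost disjointly'', i.e.\ controlling the cross-interference $\int_0^{+\infty}h_i(s)\,\overline{h_j(s)}\,ds$ for $i\ne j$ and, more seriously, bounding the $L^1$ overlap so that the sign-choice for $f$ that is good for mode $i$ does not spoil the gain from mode $j$. This requires quantitative estimates: (a) the $L^1$ mass of $h_i$ \emph{outside} its designated window $\big(0,T_i\big)$ is exponentially small in $cT_i$, controlled by $e^{-\gamma T_i}$ summed over periods as in the $\pi$-periodic computation of Theorem~\ref{thm:scalar}; and (b) on mode $j$'s window the function $h_i$ with $i<j$ (slower oscillation) is nearly constant, while $h_i$ with $i>j$ (faster) has small average — both handled by choosing the geometric ratio $R_0$ large enough once and for all, which is exactly where the universal constant $R_0$ in the statement comes from. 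Getting the constants to close up to the stated $1-\delta$ in item (2) and the clean $\tfrac{1}{2(\log_2 R)^{1/2}}$ in item (4) is then a careful but routine optimization over window lengths and the number of modes.
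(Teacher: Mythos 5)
Your proposal rests on working in the \emph{oscillatory} regime $\lambda_i > c^2/4$ and claiming that modes with well-separated frequencies $\delta_i$ contribute to $u'(0)$ through ``nearly disjoint'' time windows. This is the wrong regime and the mechanism does not work there, for two reasons.

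First, a structural obstruction: statement~(1) of Theorem~\ref{thm:KcA-upper} already shows that if \emph{every} eigenvalue of $A$ exceeds $c^2/4$ then $K(A,c)\le 4/c$, uniformly in the dimension and in the number of modes. So no construction that places all relevant eigenvalues in the oscillatory regime can produce $K(A,c)\ge(1-\delta)\sqrt{d+3}/c$ with $d$ large, nor a $(\log c)^{1/2}/c$ lower bound. The regime where the losses accumulate is precisely the \emph{non-oscillatory} one $c^2>4\lambda_i$, which is also the asymptotic regime ($c\to\infty$ with $A$, or the relevant block of eigenvalues, fixed) in which the theorem is stated.

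Second, the disjoint-window mechanism genuinely requires different exponential \emph{decay rates} for the different kernels, and these differ only in the non-oscillatory regime. There the kernel for mode $i$ is $h_i(s)=\frac{1}{\alpha_i-\beta_i}\bigl(\alpha_i e^{-\alpha_i s}-\beta_i e^{-\beta_i s}\bigr)$ with $\alpha_i\approx c$ and $\beta_i\approx\lambda_i/c$; its $L^1$ mass of size $\sim 1/c$ is concentrated on the band $[\varepsilon/\beta_i,\,L/\beta_i]$, and when the $\lambda_i$ are spaced geometrically these bands are \emph{disjoint}. That is exactly what the paper's Lemma~\ref{lemma:big-step} exploits: each $f_i$ is a single block supported on $(-T_{i-1},-T_i]$ with $T_i=\varepsilon\beta_i^{-1}$, so the blocks have pairwise disjoint support (hence $\|f(t)\|_H\le 1$ pointwise for free) and each mode picks up $\ge(e^{-\varepsilon}-2e^{-L})/c$. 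In the oscillatory regime all $h_i$ share the same envelope $e^{-cs/2}$, so the $L^1$ mass of every $h_i$ lives on the \emph{same} interval $[0,O(1/c)]$; the only way to extract $\sim 1/c$ from a given mode is to integrate $f_i$ over essentially that entire interval, and then the supports cannot be disjoint, nor do you gain from disjointness by giving mode $j$ a single short period $[0,\pi/\delta_j]$ (that captures only $O(1/\delta_j)\ll 1/c$ when $\delta_j\gg c$). The cross-interference you flag as the ``main obstacle'' is therefore not a routine estimate to be closed later: in your chosen regime it cannot be closed.

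Two smaller points. Your explanation of the ``$+3$'' inside $\sqrt{d+3}$ (``from the $\int_0^{y_0}$ part where $\cos$ dominates'') is not where it comes from: in the paper it is an elementary bookkeeping gain, because the last mode's forcing $f_n$ is extended, with a sign flip, onto the extra interval $(-T_n,0]$, which \emph{doubles} that single mode's contribution and turns $(n-1)+4$ into $n+3$. And your proof of item~(3) as a corollary of item~(4) is somewhat circular and unnecessary: once the Lemma is available, (3) follows directly by fixing $\varepsilon,L$, taking any $n$ eigenvalues from the unbounded sequence satisfying (\ref{defn:lambda-i}), and letting $n\to\infty$; no upgrade from $\liminf$ to $\lim$ is needed because the lower bound $c\,K(A,c)\ge (e^{-\varepsilon}-2e^{-L})\sqrt{n+3}$ holds for \emph{all} $c$ large, not just a sequence.

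In short: the high-level idea (superpose several modes with geometrically separated eigenvalues and choose the forcing mode by mode) matches the paper, but you must carry it out in the \emph{non-oscillatory} regime, where the small root $\beta_i\approx\lambda_i/c$ of the characteristic polynomial furnishes the distinct time scales that make the windows truly disjoint; in the oscillatory regime the strategy is blocked, both by the uniform envelope $e^{-cs/2}$ and by the a priori upper bound $K(A,c)\le 4/c$.
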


We are now in a position to show that in the vector case the optimal velocity bound $K(A,c)$ does not depend in a decreasing way on the operator $A$, in contrast with what happens in the scalar case.

\begin{cor}[Lack of monotonicity of the optimal velocity bound]\label{cor:no-monotone}

If the dimension of $H$ is at  least two, then there exist two self-adjoint coercive operators $A_{1}$ and $A_{2}$ on $H$ with $A_{1}<A_{2}$ (in the sense that $A_{2}-A_{1}$ is a positive operator), but $K(A_{1},c)<K(A_{2},c)$ when $c$ is large enough.

\end{cor}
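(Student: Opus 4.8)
The plan is to deduce Corollary~\ref{cor:no-monotone} by combining the monotonicity of the scalar constant $K(b,c)$ from Theorem~\ref{thm:scalar} with the lack of decay in the vector case coming from Theorem~\ref{thm:KcA-lower}. First I would reduce to the two-dimensional case: if $\dim H\geq 2$, pick a two-dimensional subspace $H_{0}\subseteq H$ and construct the operators so that they act as a scalar multiple of the identity on the orthogonal complement $H_{0}^{\perp}$, with the scalar large; by the "simple modes'' remark (Remark~\ref{rmk:identity}) and the monotonicity of the scalar $K(b,c)$ in $b$, for $c$ large the optimal velocity bound of the full operator is governed by its restriction to $H_{0}$, so it suffices to work on $H=\re^{2}$.

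On $\re^{2}$ the construction is as follows. For $A_{2}$ I would take a matrix of the form described in part~(2) of Theorem~\ref{thm:KcA-lower} with $d=2$: there exists a positive symmetric $2\times 2$ matrix $A_{2}$ and, fixing $\delta$ small, a constant so that $K(A_{2},c)\geq\frac{1-\delta}{c}\sqrt{5}$ when $c$ is large. For $A_{1}$ I would take $A_{1}:=b_{1}I$ for a suitable constant $b_{1}>0$ chosen large enough that $A_{1}<A_{2}$, i.e.\ so that $b_{1}$ is below the smallest eigenvalue of $A_{2}$ — wait, that is the wrong direction; rather I need $A_{2}-A_{1}$ positive, so $b_{1}$ must be at most the smallest eigenvalue $\mu_{\min}$ of $A_{2}$. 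Then, since $A_{1}=b_{1}I$ is scalar, $K(A_{1},c)=K(b_{1},c)$ by Remark~\ref{rmk:identity}, and by Theorem~\ref{thm:scalar} we have $K(b_{1},c)<2/c$ for all $c$. Thus $K(A_{1},c)<2/c<\frac{1-\delta}{c}\sqrt{5}\leq K(A_{2},c)$ once $\delta$ is fixed with $(1-\delta)\sqrt{5}>2$ (e.g.\ $\delta=1/20$ gives $(1-\delta)\sqrt{5}=\frac{19}{20}\sqrt{5}>2$) and $c$ is large. The only thing to check is compatibility: the matrix $A_{2}$ furnished by Theorem~\ref{thm:KcA-lower}(2) has some fixed smallest eigenvalue $\mu_{\min}>0$, and we simply set $b_{1}:=\mu_{\min}$ (or anything in $(0,\mu_{\min}]$), which makes $A_{2}-A_{1}=A_{2}-\mu_{\min}I$ a positive semidefinite operator; replacing $b_{1}$ by $\mu_{\min}/2$ if one insists on $A_{1}<A_{2}$ strictly makes $A_{2}-A_{1}$ positive definite.

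The main obstacle, such as it is, is bookkeeping rather than substance: one must make sure the two estimates $K(A_{1},c)<2/c$ (scalar, all $c$) and $K(A_{2},c)\geq\frac{1-\delta}{c}\sqrt{d+3}$ with $d+3=5$ (vector, $c$ large) are genuinely simultaneously available for the \emph{same} operator $A_{2}$, and that passing from $\re^{2}$ to a general $H$ with $\dim H\geq 2$ does not destroy the inequality. The latter uses that extending both operators by the same large multiple $\mu\,I$ of the identity on $H_{0}^{\perp}$ preserves the ordering $A_{1}<A_{2}$, while by Remark~\ref{rmk:identity} plus Theorem~\ref{thm:scalar}'s monotonicity in $b$ the contribution of the $H_{0}^{\perp}$-modes to the optimal velocity bound is $K(\mu,c)<K(\mu_{\min},c)<2/c$ for $\mu>\mu_{\min}$, hence dominated by the $H_{0}$-contribution; so $K(A_{i},c)$ equals $\max\{K(A_{i}|_{H_{0}},c),\,K(\mu,c)\}=K(A_{i}|_{H_{0}},c)$ for large $c$, and the two-dimensional inequality transfers verbatim. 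Once this is arranged the corollary follows immediately.
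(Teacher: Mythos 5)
Your two-dimensional construction is correct and essentially identical to the paper's: take $A_{2}$ from Theorem~\ref{thm:KcA-lower}(2), let $A_{1}=b_{1}I$ with $b_{1}$ below the smallest eigenvalue of $A_{2}$, and combine $K(A_{1},c)=K(b_{1},c)<2/c$ with $K(A_{2},c)\geq(1-\delta)\sqrt{5}/c$.

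The passage to $\dim H>2$, however, has a genuine gap. You extend \emph{both} $A_{1}$ and $A_{2}$ by $\mu I$ on $H_{0}^{\perp}$ with $\mu$ large, and then assert that $K(A_{i},c)=\max\{K(A_{i}|_{H_{0}},c),\,K(\mu,c)\}$. This ``max'' rule for block-diagonal operators is precisely what is \emph{false}, and the entire point of the factor $\sqrt{d}$ in Theorems~\ref{thm:KcA-upper} and~\ref{thm:KcA-lower} is that direct sums of scalar modes can yield a constant strictly larger than any single mode's: the best one can say from blockwise estimates is $K(A_{1}\oplus A_{2},c)\leq\bigl(K_{1}^{2}+K_{2}^{2}\bigr)^{1/2}$, because the forcing components satisfy only $\|f_{1}(t)\|^{2}+\|f_{2}(t)\|^{2}\leq\|f(t)\|^{2}$ pointwise, not $\|f_{1}\|_{L^{\infty}}^{2}+\|f_{2}\|_{L^{\infty}}^{2}\leq\|f\|_{L^{\infty}}^{2}$. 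Worse, your choice of $\mu$ \emph{large} actively backfires: the extended operator $A_{1}^{*}$ then has two widely separated eigenvalues $b_{1}\ll\mu$, and Lemma~\ref{lemma:big-step} with $n=2$ shows that for $c$ large one has $K(A_{1}^{*},c)$ close to $\sqrt{5}/c>2/c$, so the very inequality $K(A_{1}^{*},c)<2/c$ you are relying on can fail.

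The correct fix is the one the paper uses, and it asymmetrically treats the two operators. Take $A_{1}^{*}:=bI$ on \emph{all} of $H$, so that $A_{1}^{*}$ really is a scalar multiple of the identity and Remark~\ref{rmk:identity} gives $K(A_{1}^{*},c)=K(b,c)<2/c$ directly, with no direct-sum estimate required. For $A_{2}^{*}$, extend by $(b+1)I$ on $H_{0}^{\perp}$ (so $A_{2}^{*}-A_{1}^{*}>0$), and use only the \emph{one-sided} inequality $K(A_{2}^{*},c)\geq K(A_{2},c)$, which holds because any forcing term with values in the $A_{2}^{*}$-invariant subspace $H_{0}$ is also an admissible forcing term for $A_{2}^{*}$. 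This one-sided inequality is sound; your two-sided ``max'' identity is not.
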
\begin{proof}

To begin with, we consider the case where the dimension of $H$ is exactly two. Let $\delta$ be a positive real number such that $(1-\delta)\sqrt{5}\geq 2$. From statement~(2) of Theorem~\ref{thm:KcA-lower} we know that there exists a positive symmetric $2\times 2$ matrix $A_{2}$ such that $K(A_{2},c)\geq(1-\delta)\sqrt{5}/c$ when $c$ is large enough. Let $A_{1}$ be the identity $2\times 2$ matrix multiplied by a positive real number $b$. From Remark~\ref{rmk:identity} we know that $K(A_{1},c)=K(b,c)$, and therefore from (\ref{th:2/c}) and our definition of $\delta$ we deduce that
\begin{equation}
K(A_{1},c)<\frac{2}{c}\leq\frac{(1-\delta)\sqrt{5}}{c}\leq K(A_{2},c)
\nonumber
\end{equation}
when $c$ is large enough, and we conclude by observing that $A_{1}<A_{2}$ when $b$ is less than the smallest eigenvalue of $A_{2}$.

If the dimension of $H$ is greater than two, then we consider the operator $A_{1}^{*}$ that is again equal to $b$ times the identity, and the operator $A_{2}^{*}$ that coincides with $A_{2}$ in a two dimensional subspace $H^{*}$ of $H$, and with $(b+1)$ times the identity in the orthogonal of $H^{*}$. At this point the conclusion follows from the two dimensional case because $A_{1}^{*}<A_{2}^{*}$, but 
$$K(A_{1}^{*},c)=K(b,c)=K(A_{1},c)<K(A_{2},c)\leq K(A_{2}^{*},c),$$
as required. 
\end{proof}

Finally, we observe that assumption (\ref{hp:6R}) is satisfied whenever the eigenvalues of $A$ do not grow more than exponentially. The following remark shows that the assumption is satisfied in many applications to partial differential equations.

\begin{rmk}[Dirichlet Laplacian]\label{rmk:Laplacian}
\begin{em}

One of the main examples that fit into the abstract framework of (\ref{eqn:vector}) is the dissipative wave equation of the form
\begin{equation}
u_{tt}+cu_{t}-\Delta u=f(t,x)
\nonumber
\end{equation}
in some bounded open set $\Omega\subseteq\re^{d}$ with regular enough boundary and (for example) Dirichlet boundary conditions.

In this case the operator $A$ is the Dirichlet Laplacian, and from Weyl's law we know that the number $N(\lambda)$ of eigenvalues of $A$ in the interval $[0,\lambda]$ satisfies
\begin{equation}
N(\lambda)\sim\gamma\lambda^{d/2}
\qquad
\mbox{as }\lambda\to +\infty,
\nonumber
\end{equation}
where $\gamma$ is a positive constant that depends on the measure of $\Omega$. This distribution implies that, for every pair of real numbers $R>R_{0}>1$, the sequence of eigenvalues admits a subsequence satisfying (\ref{hp:6R}). Therefore, from statement~(1) of Theorem~\ref{thm:KcA-upper} and statement~(4) of Theorem~\ref{thm:KcA-lower} we deduce that in this case $K(A,c)$ decays, when $c\to +\infty$, as $O((\log c)^{1/2}/c)$, and not faster.

\end{em}
\end{rmk}

%\clearpage

\subsection{Proof of Theorem~\ref{thm:KcA-upper}}

In this proof we exploit the characterization of $K(A,c)$ as $\KUB$. We start with the second statement, whose proof is rather short.

\paragraph{\textit{\textmd{Statement (2)}}}

We observe that the matrix $A$ can be diagonalized, and therefore any solution to (\ref{eqn:vector}) is a vector whose components $u_{i}(t)$ are solutions to scalar ordinary differential equations of the form 
\begin{equation}
u_{i}''(t)+cu_{i}'(t)+\lambda_{i}u_{i}(t)=f_{i}(t),
\label{eqn:components}
\end{equation}
where the $\lambda_{i}$'s are the eigenvalues of $A$. From the scalar case we know that
\begin{equation}
\limsup_{t\to +\infty}|u_{i}'(t)|\leq 
\frac{2}{c}\limsup_{t\to +\infty}|f_{i}(t)|\leq
\frac{2}{c}\limsup_{t\to +\infty}\|f(t)\|_{H},
\nonumber
\end{equation}
and therefore
\begin{equation}
\limsup_{t\to +\infty}\|u'(t)\|_{H}\leq
\limsup_{t\to +\infty}\left\{\sum_{i=1}^{d}|u_{i}'(t)|^{2}\right\}^{1/2}\leq
\frac{2}{c}\sqrt{d}\cdot\limsup_{t\to +\infty}\|f(t)\|_{H}.
\nonumber
\end{equation}

\paragraph{\textit{\textmd{Statement (1)}}}

To begin with, we consider the special case where the operator $A$ is bounded and $c$ is large, in which case the optimal velocity bound can be estimated in terms of the norm of the operator and its coercivity constant. This special case is going to play an important role in the proof for general unbounded operators.

\begin{lemma}[Coercive bounded operators in non-oscillatory regime]\label{prop:mM}

Let $H$, $A$ and $c$ be as in Theorem~\ref{thm:KcA-upper}, and let $f\in L^{\infty}((0,+\infty),H)$.  Let us assume in addition that there exist positive real numbers $m$ and $M$ such that $c^{2}\geq 4m$ and 
\begin{equation}
m\|x\|_{H}^{2}\leq\|A^{1/2}x\|_{H}^{2}\leq M\|x\|_{H}^{2}
\qquad
\forall x\in D(A^{1/2}).
\label{hp:AM}
\end{equation}

Then every solution to (\ref{eqn:vector}) satisfies
\begin{equation}
\limsup_{t\to +\infty}\|u'(t)\|_{H}\leq
\frac{1}{c}\left(1+\frac{\sqrt{3M}}{\sqrt{m}}\,\right)\cdot\limsup_{t\to +\infty}\|f(t)\|_{H}.
\label{th:u'-mM}
\end{equation}

\end{lemma}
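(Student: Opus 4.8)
The plan is to reduce the estimate to an energy (Lyapunov) argument on a well-chosen functional, exactly in the spirit of the analytic method of \cite{2009-JMPA-FitHar,2013-DCDS-FitHar}, but tuned to the non-oscillatory regime $c^{2}\ge 4m$ so that the final constant comes out as $1+\sqrt{3M}/\sqrt m$ rather than something that fails to decay in $c$. First I would introduce the auxiliary variable
\[
w(t):=u'(t)+\mu u(t),
\]
for a parameter $\mu>0$ to be chosen later (the natural candidate being $\mu=\beta=\tfrac{c-\sqrt{c^{2}-4m}}{2}$, i.e.\ the ``slow'' root of the characteristic equation in the scalar case, or perhaps $\mu=2m/c$ which is comparable to it). With this substitution, equation (\ref{eqn:vector}) becomes a first-order equation for $w$ of the form $w'(t)+(c-\mu)w(t)+(A-\mu(c-\mu)I)u(t)=f(t)$. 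The point of the non-oscillatory hypothesis $c^{2}\ge 4m$ together with (\ref{hp:AM}) is that one can choose $\mu$ so that $c-\mu\ge c/2>0$ and the operator $A-\mu(c-\mu)I$ remains nonnegative (indeed $\mu(c-\mu)\le c^{2}/4\le m$), so both ``dissipative'' terms have a good sign.

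Next I would estimate the limsup of $\|u(t)\|_H$ and of $\|A^{1/2}u(t)\|_H$ in terms of $\limsup\|f(t)\|_H$. For the position, one has the scalar estimate componentwise, or directly: testing the equation for $u$ against $u'$, or more simply using Theorem~\ref{thmbibl:alain-LODE} / (\ref{th:kbc}) mode by mode, gives $\limsup\|u(t)\|_H\le \tfrac1m\limsup\|f(t)\|_H$ in the non-oscillatory regime $c^{2}\ge 4m$ (since each eigenvalue $\lambda_i\ge m$ and $c^{2}\ge 4m$ does not force $c^{2}\ge4\lambda_i$, so here I'd instead invoke the universal bound $\limsup\|u(t)\|_H\le\tfrac1b\limsup\|f\|$ valid whenever $c\ge2\sqrt b$, applied with $b=m$; one must check $c^{2}\ge4m$ indeed gives $c\ge2\sqrt{\lambda_i}$ is false, so the correct route is the general coercive bound). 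Then $\|A^{1/2}u(t)\|_H^{2}\le M\|u(t)\|_H^{2}$ from (\ref{hp:AM}). Feeding these back into the first-order equation for $w$ and using Duhamel together with $\|e^{-(c-\mu)t}\|\le e^{-ct/2}$ and the nonnegativity of $A-\mu(c-\mu)I$, one bounds $\limsup\|w(t)\|_H$, and hence $\limsup\|u'(t)\|_H\le\limsup\|w(t)\|_H+\mu\limsup\|u(t)\|_H$. The arithmetic should collapse to a term of order $\tfrac1c$ times $1$ (from the $w$-part, since $c-\mu\ge c/2$) plus a term of order $\tfrac1c\cdot\tfrac{\sqrt M}{\sqrt m}$ (from the contribution of $A^{1/2}u$ in the Duhamel integral), matching the claimed constant after tracking the factor $\sqrt3$.

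**Main obstacle.** The delicate point is getting the \emph{constant} exactly right, in particular the factor $\sqrt{3M/m}$ rather than a larger multiple. The Duhamel/energy estimate naturally produces a constant of the form $\tfrac1c(1+C\sqrt{M/m})$ with some explicit $C$, and squeezing $C$ down to $\sqrt3$ will require the sharp choice of $\mu$ and a careful energy identity rather than a crude Gronwall bound — probably differentiating a combination like $\|w\|_H^{2}+\kappa\|A^{1/2}u\|_H^{2}+\text{(cross term)}$ and optimizing $\kappa$ and $\mu$ jointly. I expect the clean way is to diagonalize $A$ and reduce to the scalar estimate (\ref{th:kbc}) in the non-oscillatory case $c^2\ge 4\lambda_i$, for which $K(\lambda_i,c)=\tfrac2{\sqrt{\lambda_i}}\big(\tfrac{1-\Delta_i}{1+\Delta_i}\big)^{1/(2\Delta_i)}\le \tfrac2{\sqrt{\lambda_i}}\cdot\tfrac{1-\Delta_i}{1+\Delta_i}\cdot(\dots)$ — but here $c^2\ge4m$ does not give $c^2\ge4\lambda_i$, so a purely modewise scalar reduction is unavailable and the vectorial energy argument above is genuinely needed. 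Balancing these considerations, the energy-functional route with the shift $w=u'+\mu u$ is the one I would carry through, and the bookkeeping of the final constant is where the real work lies.
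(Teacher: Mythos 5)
Your proposal is incomplete and misses the one idea that makes the paper's proof short: the intermediate bound on $\|A^{1/2}u(t)\|_{H}$ is not re-derived by a new energy argument, but simply quoted from the already-cited Theorem~\ref{thmbibl:alain-PDE}. With $B=cI$, assumption (\ref{hp:BA}) holds with $C=c/m$, and then (\ref{th:alain-pde-u}) gives
\begin{equation}
\limsup_{t\to+\infty}\|A^{1/2}u(t)\|_{H}\le\max\left\{\sqrt{\tfrac{3}{m}},\tfrac{3}{c\sqrt2}\right\}\limsup_{t\to+\infty}\|f(t)\|_{H}
=\sqrt{\tfrac{3}{m}}\;\limsup_{t\to+\infty}\|f(t)\|_{H},
\nonumber
\end{equation}
the last equality holding precisely because $c\ge2\sqrt m$. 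This is where the factor $\sqrt3$ actually comes from: it is inherited verbatim from the cited result and does not require any optimization of a Lyapunov functional, choice of $\mu$, or ``delicate bookkeeping.'' You gesture at the right kind of dependence on $\sqrt{M/m}$, but your plan never produces the constant and you yourself flag this as the unresolved ``main obstacle,'' so as written the argument has a genuine gap.

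The second difference is that the shift $w=u'+\mu u$ is unnecessary and introduces exactly the constant-tracking problem you worry about. The paper instead combines the above bound with the upper half of (\ref{hp:AM}), namely $\|Au\|_{H}\le\sqrt M\,\|A^{1/2}u\|_{H}$, to get $\limsup\|Au(t)\|_{H}\le\sqrt{3M/m}\,\limsup\|f(t)\|_{H}$, and then rewrites (\ref{eqn:vector}) as $u''+cu'=f-Au$, a first-order linear equation in $v=u'$ with damping rate $c$. Duhamel immediately gives $\limsup\|u'(t)\|_{H}\le\frac1c\limsup\|f(t)-Au(t)\|_{H}\le\frac1c\bigl(1+\sqrt{3M/m}\bigr)\limsup\|f(t)\|_{H}$, which is (\ref{th:u'-mM}). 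In short: you had the right final step (first-order reduction and Duhamel), but you should replace the proposed fresh energy estimate and the $\mu$-shift by a direct appeal to (\ref{th:alain-pde-u}) combined with the boundedness $\|A^{1/2}\|\le\sqrt M$; once that is done, the proof is three lines and there is nothing to optimize.
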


\begin{proof}

When $c^{2}\geq 4m$, and the damping operator $B$ is $c$ times the identity, then assumption (\ref{hp:BA}) is satisfied with $C:=c/m$, and the constant in (\ref{th:alain-pde-u}) turns out to be $\sqrt{3/m}$. Therefore, from the boundedness assumption (\ref{hp:AM}) and estimate (\ref{th:alain-pde-u}) we deduce that
\begin{equation}
\limsup_{t\to +\infty}\|Au(t)\|_H\leq 
\sqrt{M}\limsup_{t\to +\infty}\|A^{1/2}u(t)\|_{H}\leq 
\frac{\sqrt{3M}}{\sqrt{m}}\limsup_{t\to +\infty}\|f(t)\|_{H}.
\label{est:Au}
\end{equation}

Now we write equation (\ref{eqn:vector}) in the form
\begin{equation}
u''(t)+cu'(t)=f(t)-Au(t),
\nonumber
\end{equation}
and we regard it as a first order equation in the unknown $u'(t)$. We deduce that
\begin{equation}
\limsup_{t\to +\infty}\|u'(t)\|_{H}\leq
\frac{1}{c}\limsup_{t\to +\infty}\left(\|f(t)\|_{H}+\|Au(t)\|_{H}\strut\right),
\nonumber
\end{equation}
which implies (\ref{th:u'-mM}) because of (\ref{est:Au}).
\end{proof}

We are now ready for a proof of (\ref{th:KcA-upper}), for which we distinguish two cases. 

If $c^{2}\leq 4b$, the result follows directly from Theorem~\ref{thmbibl:alain-PDE}. Indeed, when the dissipation operator $B$ is $c$ times the identity, assumption (\ref{hp:BA}) is satisfied with $C:=c/b$, so that from (\ref{th:alain-pde-u'}) we deduce that
\begin{equation}
\limsup_{t\to +\infty}\|u'(t)\|_{H}\leq
\max\left\{\frac{\sqrt{3}}{\sqrt{b}},\frac{3}{c\sqrt{2}}\right\}
\limsup_{t\to +\infty}\|f(t)\|_{H}\leq
\frac{4}{c}\limsup_{t\to +\infty}\|f(t)\|_{H},
\label{est:u'-oscill}
\end{equation}
which proves (\ref{th:KcA-upper}) in this case.

If $c^{2}>4b$, we take the nonnegative integer $k$ such that 
\begin{equation}
2^{k}<\frac{c^{2}}{4b}\leq 2^{k+1},
\nonumber
\end{equation}
and we partition the half-line $[b,+\infty)$ as the union of $k+1$ intervals of the form
\begin{equation}
I_{j}:=[2^{j}b,2^{j+1}b)
\qquad
\forall j\in\{0,1,\ldots,k\}
\nonumber
\end{equation}
and the half-line $I_{\infty}:=[2^{k+1}b,+\infty)$.

From the spectral theory for self-adjoint operators, we know that one can write the Hilbert space $H$ as a direct orthogonal sum of $A$-invariant subspaces
\begin{equation}
H=H_{0}\oplus H_{1}\oplus\cdots\oplus H_{k}\oplus H_{\infty}
\label{H-decomp}
\end{equation}
with the property that
\begin{equation}
2^{j}b\cdot\|x\|_{H}^{2}\leq
\langle Ax,x\rangle\leq 
2^{j+1}b\cdot\|x\|_{H}^{2}
\qquad
\forall x\in H_{j}
\nonumber
\end{equation}
if $0\leq j\leq k$, and
\begin{equation}
\langle Ax,x\rangle\geq\ 2^{k+1}b\cdot\|x\|_{H}^{2}\geq\frac{c^{2}}{4}\|x\|_{H}^{2}
\qquad
\forall x\in H_{\infty}\cap D(A),
\nonumber
\end{equation}

According to the decomposition (\ref{H-decomp}), we can write $A$ as the sum of $k+2$ operators $A_{j}$, the solution $u(t)$ as the sum of $k+2$ functions $u_{j}(t)$, and the forcing term $f(t)$ as the sum of $k+2$ forcing terms $f_{j}(t)$ in such a way that
\begin{equation}
u_{i}''(t)+cu_{j}'(t)+A_{j}u_{j}(t)=f_{j}(t)
\nonumber
\end{equation}
for every admissible value of the index $j$.

For every $0\leq j\leq k$, the operator $A_{j}$ satisfies the assumptions of Lemma~\ref{prop:mM} with $M/m= 2$, and therefore
\begin{equation}
\limsup_{t\to +\infty}\|u_{j}'(t)\|_{H}\leq
\frac{1+\sqrt{6}}{c}\limsup_{t\to +\infty}\|f_{j}(t)\|_{H}\leq
\frac{4}{c}\limsup_{t\to +\infty}\|f(t)\|_{H}.
\nonumber
\end{equation}

Moreover, the operator $A_{\infty}$ has coercivity constant greater than or equal to $c^{2}/4$, and therefore in analogy with (\ref{est:u'-oscill}) we obtain that
\begin{equation}
\limsup_{t\to +\infty}\|u_{\infty}'(t)\|_{H}\leq
\frac{4}{c}\limsup_{t\to +\infty}\|f_{\infty}(t)\|_{H}\leq
\frac{4}{c}\limsup_{t\to +\infty}\|f(t)\|_{H}.
\nonumber
\end{equation}

From these estimates we deduce that
\begin{eqnarray*}
\limsup_{t\to +\infty}\|u'(t)\|_{H} & \leq  &
\limsup_{t\to +\infty}\left\{\|u_{\infty}'(t)\|_{H}^{2}+
\sum_{j=0}^{k}\|u_{j}'(t)\|_{H}^{2}\right\}^{1/2}\\
& \leq & \frac{4}{c}(k+2)^{1/2}\limsup_{t\to +\infty}\|f(t)\|_{H},
\end{eqnarray*}
and we conclude by observing that 
\begin{equation}
k+2\leq\log_{2}\left(\frac{c^{2}}{4b}\right)+2=\log_{2}\left(\frac{c^{2}}{b}\right).
\nonumber
\end{equation}

%\clearpage

\subsection{Proof of Theorem~\ref{thm:KcA-lower}}

The technical core of the proof is the following result.

\begin{lemma}\label{lemma:big-step}

Let $H$ be a Hilbert space, let $A$ be a self-adjoint operator on $H$, and let $n\geq 2$ be an integer. Let $L>\ep$ be two positive real numbers such that
\begin{equation}
e^{-\ep}-2e^{-L}>0.
\label{hp:L-ep}
\end{equation}

Let us assume that $A$ admits $n$ positive eigenvalues $\lambda_{1}$, \ldots, $\lambda_{n}$ such that 
\begin{equation}
\lambda_{i+1}\geq\frac{2L}{\ep}\lambda_{i}
\qquad
\forall i\in\{1,\ldots,n-1\}.
\label{defn:lambda-i}
\end{equation}

Then it turns out that
\begin{equation}
K(A,c)\geq\frac{e^{-\ep}-2e^{-L}}{c}\cdot\sqrt{n+3}
\qquad
\forall c\geq\left(\frac{4L}{\ep}\lambda_{n}\right)^{1/2}.
\label{th:construction}
\end{equation}

\end{lemma}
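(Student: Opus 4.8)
The plan is to construct an explicit bad forcing term $f(t)$, supported on a specific family of time-scales, that forces the velocity $u'(0)$ of the global bounded solution to be large. I will use the characterization $K(A,c)=\KGB^{0}$ from Theorem~\ref{thm:abstract}, so it suffices to exhibit one $f\in L^{\infty}(\re,H)$ with $\|f\|_{L^{\infty}(\re,H)}\leq 1$ and $\|u'(0)\|_{H}\geq \frac{e^{-\ep}-2e^{-L}}{c}\sqrt{n+3}$. Since the eigenvalues $\lambda_1,\dots,\lambda_n$ give $A$-invariant one-dimensional subspaces, I decompose into scalar equations $u_i''+cu_i'+\lambda_i u_i=f_i(t)$ and build $f$ with $f_i$ supported on disjoint (or nearly disjoint) time-windows, chosen so that the total $L^\infty$ norm $\|f(t)\|_H=(\sum f_i(t)^2)^{1/2}$ stays $\leq 1$ even though each $f_i$ individually has sup norm close to $1$ on its own window. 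The point of condition (\ref{defn:lambda-i}) — the geometric (in fact super-geometric) spacing of the eigenvalues — is exactly that the natural time-scale $1/\lambda_i$ associated with mode $i$ (in the overdamped regime $c^2\gg\lambda_i$, the slow decay rate is $\beta_i=\lambda_i/c+O(\lambda_i^3/c^3)$, so the relevant scale is $c/\lambda_i$) separates the windows enough that the contributions do not interfere.

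Concretely, I expect the construction to go as follows. On each mode $i$, in the overdamped regime the formula (\ref{formula:u'}) gives $u_i'(0)=\int_0^\infty(\alpha_i e^{-\alpha_i s}-\beta_i e^{-\beta_i s})/(\alpha_i-\beta_i)\,f_i(-s)\,ds$, where $\alpha_i\approx c$ and $\beta_i\approx\lambda_i/c$. Choosing $f_i(-s)=\operatorname{sign}(\alpha_i e^{-\alpha_i s}-\beta_i e^{-\beta_i s})$ but \emph{truncated} to an interval $[0,T_i]$ (or a window $[t_i^-,t_i^+]$) recovers, up to the truncation error controlled by $e^{-\ep}$ and $e^{-L}$, the value $\approx 2/\sqrt{\lambda_i}\cdot(\text{something close to }1)$; but we actually want the velocity, so rescaling by $\sqrt{\lambda_i}$ we get each $|u_i'(0)|\gtrsim (e^{-\ep}-2e^{-L})/c$ — wait, more carefully: the scalar velocity bound is $K(\lambda_i,c)\approx 2/c$ in the overdamped limit, and the truncation to a window of length $\sim L/\beta_i = Lc/\lambda_i$ on the left and buffer $\sim\ep/\alpha_i$ on the right loses only the factors in (\ref{hp:L-ep}). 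The windows for different modes live at scales $Lc/\lambda_i$ which by (\ref{defn:lambda-i}) differ by factors $\geq 2L/\ep$, so they can be placed consecutively with the window of mode $i+1$ (scale $Lc/\lambda_{i+1}\leq \ep c/(2\lambda_i)$) fitting inside the "buffer gap" left before the window of mode $i$. Then $\|f(t)\|_H\le 1$ pointwise because at each time at most one $f_i$ is active. Summing, $\|u'(0)\|_H^2=\sum_{i=1}^n u_i'(0)^2\geq n\cdot\big((e^{-\ep}-2e^{-L})/c\big)^2$, and the extra $+3$ (giving $\sqrt{n+3}$ rather than $\sqrt{n}$) should come from adding a few more modes at the coercivity eigenvalue or from a sharper accounting of the $i=1$ mode near the critical/oscillatory boundary — I would look for three more "free" dimensions, perhaps by also exploiting that in the purely oscillatory scalar case one can get a constant somewhat above $4/(\pi c)$, or by a rot-trick adding a rank-3 block; this is a detail to pin down from the intended constant.

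The main obstacle, and the step I would spend the most care on, is the \textbf{interference estimate}: making rigorous that placing the truncated windows consecutively really keeps $\|f(t)\|_H\le 1$ \emph{and} that the "tail" of mode $i$'s optimal sign function outside its window (which we have set to $0$) costs at most the multiplicative factor $e^{-\ep}-2e^{-L}$ in $|u_i'(0)|$. This requires: (a) choosing the right endpoints $t_i^-,t_i^+$ in terms of $\alpha_i,\beta_i$ so that the head-truncation (near $s=0$, scale $1/\alpha_i\sim 1/c$) and the tail-truncation (near $s\to\infty$, scale $1/\beta_i$) each contribute a controlled error — the head integral $\int_0^{s}|\cdots|\,ds$ over a length $\ep/\alpha_i$ is $O(\ep)$ type, and the tail $\int_{s}^\infty e^{-\beta_i s'}\,ds'$ over the part beyond $L/\beta_i$ is $O(e^{-L})$ type, matching (\ref{hp:L-ep}); and (b) checking that the lower bound $c\ge(4L\lambda_n/\ep)^{1/2}$ is exactly what makes \emph{every} mode $i\le n$ overdamped with $\alpha_i/\beta_i$ large enough that these asymptotics ($\alpha_i\sim c$, $\beta_i\sim\lambda_i/c$) are valid with the stated error constants — indeed $c^2\ge 4L\lambda_n/\ep\ge 4\lambda_i\cdot(L/\ep)$ with $L/\ep>1$, so $c^2>4\lambda_i$ comfortably, and the spacing (\ref{defn:lambda-i}) propagates this down. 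After these estimates are in place, the conclusion (\ref{th:construction}) is just the Pythagorean sum over the $n$ modes plus the bookkeeping for the extra $3$.
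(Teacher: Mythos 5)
Your construction is essentially the one in the paper: decompose along the eigenmodes $\lambda_1<\cdots<\lambda_n$, and build $f(t)=\sum_i f_i(t)e_i$ with the $f_i$ supported on consecutive, non-overlapping time windows (so $\|f(t)\|_H\le 1$ pointwise), the windows being located at scales $\sim 1/\beta_i$ with $\beta_i=(c-\sqrt{c^2-4\lambda_i})/2\approx\lambda_i/c$. The spacing hypothesis (\ref{defn:lambda-i}) is used exactly as you say: together with the threshold $c\ge(4L\lambda_n/\ep)^{1/2}$ it guarantees that with cut-points $T_i:=\ep/\beta_i$ one has $\beta_iT_i=\ep$, $\alpha_iT_i\ge L$, and $\beta_iT_{i-1}\ge L$ simultaneously, so that each truncated window costs only the multiplicative factor $e^{-\ep}-2e^{-L}$ against the untruncated scalar bound, yielding $|u_i'(0)|\ge (e^{-\ep}-2e^{-L})/c$. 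You also correctly identify the key interference issue (the windows partition $(-\infty,0]$, so at each time at most one $f_i$ is nonzero).

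The one genuine gap is the $\sqrt{n+3}$. As you sketched it, each of the $n$ modes contributes $((e^{-\ep}-2e^{-L})/c)^2$ to $\|u'(0)\|_H^2$, which only gives $\sqrt n$. Your speculations for recovering $+3$ (extra modes at the coercivity eigenvalue, a sharper treatment of the $i=1$ mode near the oscillatory threshold, a rank-3 block) are not where it comes from. The actual device is much simpler and is purely about the \emph{last} mode: take $f_n$ to equal $-1$ on $(-T_{n-1},-T_n]$ as for the other modes, \emph{and additionally} $+1$ on $(-T_n,0]$. These two pieces reinforce, and the same three-exponential estimate shows $|u_n'(0)|\ge 2(e^{-\ep}-2e^{-L})/c$. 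Squaring, the $n$-th mode contributes $4$ rather than $1$, hence $(n-1)+4=n+3$ and the stated $\sqrt{n+3}$. Note this extra piece of $f_n$ lives on $(-T_n,0]$, which is disjoint from all other windows, so the constraint $\|f(t)\|_H\le 1$ is preserved. With this modification your argument matches the paper's; without it you only prove the weaker bound with $\sqrt n$.
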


\begin{proof}

Due to the characterization of $K(A,c)$ as $\KGB^{0}$, it is enough to exhibit a piecewise constant forcing term $f\in L^{\infty}(\re,H)$ such that 
\begin{itemize}

\item  $\|f(t)\|_{H}\in\{0,1\}$ for every $t\in\re$,

\item  the unique solution $u(t)$ to (\ref{eqn:vector}) that is globally bounded satisfies
\begin{equation}
\|u'(0)\|_{H}\geq\frac{e^{-\ep}-2e^{-L}}{c}\cdot\sqrt{n+3}
\qquad
\forall c\geq\left(\frac{4L}{\ep}\lambda_{n}\right)^{1/2}.
\nonumber
\end{equation}

\end{itemize}

To this end, for every $i=1,\ldots,n$ we set 
\begin{equation}
\alpha_{i}:=\frac{c+\sqrt{c^{2}-4\lambda_{i}}}{2},
\qquad\qquad
\beta_{i}:=\frac{c-\sqrt{c^{2}-4\lambda_{i}}}{2}.
\nonumber
\end{equation}

We observe that $-\alpha_{i}$ and $-\beta_{i}$ are the two roots of the characteristic equation 
\begin{equation}
x^{2}+cx+\lambda_{i}=0,
\nonumber
\end{equation}
and they are negative real numbers because $L>\ep$ and hence $c^{2}> 4\lambda_{i}$. Moreover $\beta_{i}$ is increasing with $i$. 

Now we set $T_{0}:=+\infty$, and $T_{i}:=\ep\beta_{i}^{-1}$ for every $i=1,\ldots,n$. We observe that $-T_{1}$, \ldots, $-T_{n}$ is an increasing sequence of negative real numbers. For every $i=1$, \ldots, $n-1$, we consider the functions $f_{i}:\re\to\{0,-1\}$ defined by
\begin{equation}
f_{i}(t):=\left\{
\begin{array}{l@{\qquad}l}
-1 & \mbox{if }t\in(-T_{i-1},-T_{i}],  \\[0.5ex]
0 & \mbox{otherwise,}
\end{array}\right.
\nonumber
\end{equation}
and for $i=n$ we consider the function $f_{n}:\re\to\{0,-1,1\}$ defined by
\begin{equation}
f_{n}(t):=\left\{
\begin{array}{l@{\qquad}l}
-1 & \mbox{if }t\in(-T_{n-1},-T_{n}],  \\[0.5ex]
1 & \mbox{if }t\in(-T_{n},0],  \\[0.5ex]
0 & \mbox{otherwise.}
\end{array}\right.
\nonumber
\end{equation}

Finally, we define $f:\re\to H$ by
\begin{equation}
f(t):=\sum_{i=1}^{n}f_{i}(t)e_{i}
\qquad
\forall t\in\re,
\nonumber
\end{equation}
where $e_{i}$ is a unit eigenvector of $A$ corresponding to the eigenvalue $\lambda_{i}$. The function $f(t)$ vanishes for every positive time, and jumps among unit vectors for negative times, and hence $\|f(t)\|_{H}=1$ for every $t\leq 0$. 

Let $u(t)$ denote the unique solution to (\ref{eqn:vector}) that is globally bounded. This solution can be written in the form
\begin{equation}
u(t)=\sum_{i=1}^{n}u_{i}(t)e_{i},
\nonumber
\end{equation}
where $u_{i}(t)$ is the unique bounded solution to the scalar ordinary differential equation (\ref{eqn:components}). We claim that, when $c$ satisfies the condition in (\ref{th:construction}), it turns out that
\begin{equation}
u_{i}'(0)\geq\frac{e^{-\ep}-2e^{-L}}{c}
\qquad
\forall i\in\{1,\ldots,n-1\},
\label{est:ui'}
\end{equation}
and
\begin{equation}
u_{n}'(0)\geq\frac{2(e^{-\ep}-2e^{-L})}{c}.
\label{est:un'}
\end{equation}

Since numerators are positive because of (\ref{hp:L-ep}), from these claims it follows that
\begin{equation}
|u'(0)|=
\left\{\sum_{i=1}^{n}u_{i}'(0)^{2}\right\}^{1/2}\geq
\frac{e^{-\ep}-2e^{-L}}{c}\cdot\sqrt{n+3},
\nonumber
\end{equation}
which is exactly (\ref{th:construction}).

In order to prove the claims, we start from the usual formula (see (\ref{formula:u'}))
\begin{equation}
u_{i}'(0)=\frac{1}{\alpha_{i}-\beta_{i}}\int_{0}^{+\infty}\left(\alpha_{i}e^{-\alpha_{i}s}-\beta_{i}e^{-\beta_{i}s}\right)f_{i}(-s)\,ds.
\nonumber
\end{equation}

If $i\in\{1,\ldots,n-1\}$, from the definition of $f_{i}$ we obtain that
\begin{eqnarray*}
u_{i}'(0) & = & 
\frac{1}{\sqrt{c^{2}-4\lambda_{i}}}\int_{T_{i}}^{T_{i-1}}\left(-\alpha_{i}e^{-\alpha_{i}s}+\beta_{i}e^{-\beta_{i}s}\right)\,ds 
\\
& = & 
\frac{1}{\sqrt{c^{2}-4\lambda_{i}}}\left(e^{-\alpha_{i}T_{i-1}}-e^{-\alpha_{i}T_{i}}-e^{-\beta_{i}T_{i-1}}+e^{-\beta_{i}T_{i}}\right)
\end{eqnarray*}
(with the obvious agreement that $e^{-\infty}=0$), and therefore
\begin{equation}
u_{i}'(0)\geq\frac{1}{\sqrt{c^{2}-4\lambda_{i}}}\left(e^{-\beta_{i}T_{i}}-e^{-\alpha_{i}T_{i}}-e^{-\beta_{i}T_{i-1}}\right)
\qquad
\forall i\in\{1,\ldots,n-1\}.
\label{est:ui'0-base}
\end{equation}

If $i=n$, from the definition of $f_{n}$ we obtain that
\begin{equation}
u_{n}'(0)= 
\frac{1}{\sqrt{c^{2}-4\lambda_{n}}}
\left\{\int_{T_{n}}^{T_{n-1}}\left(-\alpha_{n}e^{-\alpha_{n}s}+\beta_{n}e^{-\beta_{n}s}\right)\,ds+\int_{0}^{T_{n}}\left(\alpha_{n}e^{-\alpha_{n}s}-\beta_{n}e^{-\beta_{n}s}\right)\,ds\right\},
\nonumber
\end{equation}
and therefore
\begin{eqnarray}
u_{n}'(0) & = & 
\frac{1}{\sqrt{c^{2}-4\lambda_{n}}}\left(2e^{-\beta_{n}T_{n}}-2e^{-\alpha_{n}T_{n}}+e^{-\alpha_{n}T_{n-1}}-e^{-\beta_{n}T_{n-1}}\right) 
\nonumber
\\[0.5ex]
& \geq & \frac{2}{\sqrt{c^{2}-4\lambda_{n}}}\left(e^{-\beta_{n}T_{n}}-e^{-\alpha_{n}T_{n}}-e^{-\beta_{n}T_{n-1}}\right).
\label{est:un'0-base}
\end{eqnarray}

Let us consider the three exponential terms in (\ref{est:ui'0-base}) and (\ref{est:un'0-base}). We claim that the first one (positive) dominates the two negative ones. To this end, let us estimate the three exponents.
\begin{itemize}

\item  From the definition of $T_{i}$ we obtain that $\beta_{i}T_{i}=\ep$.

\item  As for the second exponent, we observe that
\begin{equation}
\frac{\alpha_{i}T_{i}}{\ep}=
\frac{\alpha_{i}}{\beta_{i}}=
\frac{c+\sqrt{c^{2}-4\lambda_{i}}}{c-\sqrt{c^{2}-4\lambda_{i}}}=
\frac{\left(c+\sqrt{c^{2}-4\lambda_{i}}\right)^{2}}{4\lambda_{i}}\geq
\frac{c^{2}}{4\lambda_{i}},
\nonumber
\end{equation}
and therefore when $c^{2}\geq 4L\lambda_{n}/\ep$ we obtain that
\begin{equation}
\alpha_{i}T_{i}\geq\frac{\ep c^{2}}{4\lambda_{i}}\geq\frac{\ep c^{2}}{4\lambda_{n}}\geq L.
\nonumber
\end{equation}

\item  As for the third exponent, we observe that
\begin{equation}
\frac{\beta_{i}T_{i-1}}{\ep}=
\frac{\beta_{i}}{\beta_{i-1}}=
\frac{c-\sqrt{c^{2}-4\lambda_{i}}}{c-\sqrt{c^{2}-4\lambda_{i-1}}}=
\frac{c+\sqrt{c^{2}-4\lambda_{i-1}}}{c+\sqrt{c^{2}-4\lambda_{i}}}\cdot\frac{\lambda_{i}}{\lambda_{i-1}},
\nonumber
\end{equation}
and therefore from (\ref{defn:lambda-i}) we deduce that
\begin{equation}
\beta_{i}T_{i-1}\geq\ep\cdot\frac{c}{2c}\cdot\frac{\lambda_{i}}{\lambda_{i-1}}\geq L.
\nonumber
\end{equation}

\end{itemize}

From these estimates we conclude that
\begin{equation}
e^{-\beta_{i}T_{i}}-e^{-\alpha_{i}T_{i}}-e^{-\beta_{i}T_{i-1}}\geq 
e^{-\ep}-e^{-L}-e^{-L}
\qquad
\forall i\in\{1,\ldots,n\}.
\label{est:exp}
\end{equation}

Since $\sqrt{c^{2}-4\lambda_{i}}\leq c$, plugging (\ref{est:exp}) into (\ref{est:ui'0-base}) and (\ref{est:un'0-base}), we obtain (\ref{est:ui'}) and (\ref{est:un'}), respectively, as required.
\end{proof}

We are now ready to prove the four statements of Theorem~\ref{thm:KcA-lower}.

\paragraph{\textmd{\textit{Statement~(1)}}}

We can limit ourselves to forcing terms of the form $f(t):=\varphi(t)e_{1}$, where $e_{1}$ is the  element of the orthonormal basis corresponding to the smallest eigenvalue $\lambda_{1}$ of $A$, and $\varphi\in L^{\infty}(\re,\re)$. In this case the unique global solution to (\ref{eqn:vector}) has the form $u(t)=v(t)e_{1}$, where $v(t)$ is the unique global bounded solution to the scalar equation
\begin{equation}
v''(t)+cv'(t)+\lambda_{1}v(t)=\varphi(t).
\nonumber
\end{equation}

At this point the result follows from Theorem~\ref{thm:scalar}. 

\paragraph{\textmd{\textit{Statement~(2)}}}

Let us choose real numbers $L_{0}>\ep_{0}>0$ such that 
$$e^{-\ep_{0}}-2e^{-L_{0}}\geq 1-\delta.$$ 

This is possible whenever $\ep_{0}$ is small enough and $L_{0}$ is large enough. Let us consider the diagonal matrix whose eigenvalue are $(2L_{0}/\ep_{0})^{i}$ for $i=1,\ldots, d$. At this point the conclusion follows from Lemma~\ref{lemma:big-step}.

\paragraph{\textmd{\textit{Statement~(3)}}}

Let us set $\ep:=1$ and $L:=2$. Since in this case the sequence of the eigenvalues of $A$ is unbounded, for every $n$ we can always find eigenvalues $\lambda_{1}$, \ldots, $\lambda_{n}$ satisfying (\ref{defn:lambda-i}). Therefore, from Lemma~\ref{lemma:big-step} we deduce that
\begin{equation}
\liminf_{c\to +\infty}c\cdot K(A,c)\geq\frac{e-2}{e^{2}}\cdot\sqrt{n+3}.
\nonumber
\end{equation}

Since $n$ is arbitrary, this proves (\ref{th:lower-lim}).

\paragraph{\textmd{\textit{Statement~(4)}}}

Let us choose real numbers $L_{0}>\ep_{0}>0$ such that 
$$e^{-\ep_{0}}-2e^{-L_{0}}\geq\frac{1}{2},$$
and let us set
\begin{equation}
R_{0}:=\frac{2L_{0}}{\ep_{0}}.
\nonumber
\end{equation}

Let $\{\lambda_{n}\}$ denote the (sub)sequence of eigenvalues of $A$ satisfying (\ref{hp:6R}). For every positive integer $n$, let us set
\begin{equation}
\sigma_{n}:=\left(2R_{0}R^{n-1}\lambda_{1}\right)^{1/2},
\nonumber
\end{equation}
and let us choose a positive integer $n_{0}$ such that $\sigma_{n_{0}}\geq 1$ and
\begin{equation}
R^{n-1}\geq 2R_{0}\lambda_{1}
\qquad
\forall n\geq n_{0}.
\label{defn:n0}
\end{equation}

We claim that
\begin{equation}
K(A,c)\geq\frac{1}{2}\cdot\frac{(\log_{2} c)^{1/2}}{c}\cdot\frac{1}{(\log_{2}R)^{1/2}}
\qquad
\forall c\geq\sigma_{n_{0}}.
\nonumber
\end{equation}

Since $\sigma_{n}$ is increasing and tends to $+\infty$, this estimate is proved if we show that for every $n\geq n_{0}$ it turns out that
\begin{equation}
K(A,c)\geq\frac{1}{2}\cdot\frac{(\log_{2} c)^{1/2}}{c}\cdot\frac{1}{(\log_{2}R)^{1/2}}
\qquad
\forall c\in[\sigma_{n},\sigma_{n+1}].
\label{pre-th}
\end{equation}

To this end, let us consider any $n\geq n_{0}$. Due to the estimate from below in (\ref{hp:6R}), the eigenvalues $\lambda_{1}$, \ldots, $\lambda_{n}$ satisfy
\begin{equation}
\frac{\lambda_{i+1}}{\lambda_{i}}\geq\frac{2L_{0}}{\ep_{0}}
\qquad
\forall i\in\{1,\ldots,n-1\}.
\nonumber
\end{equation}

On the other hand, from the estimate from above in (\ref{hp:6R}) we deduce that
\begin{equation}
\lambda_{n}\leq R^{n-1}\lambda_{1},
\nonumber
\end{equation}
and hence
\begin{equation}
c\geq
\sigma_{n}=
(2R_{0}R^{n-1}\lambda_{1})^{1/2}\geq
(2R_{0}\lambda_{n})^{1/2}=
\left(\frac{4L_{0}}{\ep_{0}}\lambda_{n}\right)^{1/2}.
\nonumber
\end{equation}

Therefore, from Lemma~\ref{lemma:big-step} we obtain that
\begin{equation}
K(A,c)\geq
\frac{e^{-\ep_{0}}-2e^{-L_{0}}}{c}\cdot\sqrt{n+3}\geq
\frac{1}{2c}\sqrt{n}
\qquad
\forall c\geq\sigma_{n}.
\label{est:cak-1}
\end{equation}

Since in addition $c\leq\sigma_{n+1}$, from (\ref{defn:n0}) we deduce that
\begin{equation}
c^{2}\leq
\sigma_{n+1}^{2}=
2R_{0}\lambda_{1}R^{n}\leq
R^{2n-1}\leq R^{2n},
\nonumber
\end{equation}
and therefore $n\log_{2}R\geq\log_{2}c$ (we recall that $c\geq 1$ because $\sigma_{n_{0}}\geq 1$). Plugging this estimate into (\ref{est:cak-1}) we obtain (\ref{pre-th}), and this completes the proof.

%\clearpage

\setcounter{equation}{0}
\section{Future perspectives and open problems}\label{sec:open}

In this section we mention some questions and open problems, inspired by this paper, that it could be interesting to investigate.

The first one concerns the estimates on $u(t)$, which we did not study here for the sake of shortness. In the linear scalar case the complete answer was already provided in~\cite{2005-AccXL-Haraux}, see Theorem~\ref{thmbibl:alain-LODE} in the introduction. An extension of that result to the vector case could lead to interesting applications to semilinear problems.

\begin{open}

Find optimal ultimate bounds for solutions $u(t)$ to the vector equation (\ref{eqn:vector}).

\end{open}

Limiting ourselves to scalar problems, now we know the optimal bounds both for $u(t)$ and for $u'(t)$ in the case of the linear equation (\ref{eqn:scalar}). We suspect that exactly the same bounds could apply also to solutions to the nonlinear equation (\ref{eqn:loud}) when the nonlinearity satisfies (\ref{hp:loud}). 

\begin{open}\label{open:loud}

Let us consider the optimal bounds on $u(t)$ provided by Theorem~\ref{thmbibl:alain-LODE}, and the optimal bounds on $u'(t)$ provided by Theorem~\ref{thm:scalar}. Do they remain true also for solutions to the general equation (\ref{eqn:loud}) under Loud's assumption (\ref{hp:loud})?

\end{open}

A positive answer to problem~\ref{open:loud} would be consistent with the intuitive idea that a bigger restoring force prevents solutions from growing too much, and therefore the linear case is the worst case scenario compatible with (\ref{hp:loud}). Some evidence of this effect is provided by Theorem~\ref{thm:scalar}, where we proved the monotonicity of $K(b,c)$ with respect to $b$. On the other hand, Corollary~\ref{cor:no-monotone} suggests that this is true only in the scalar case. 

A third open question concerns the monotonicity of optimal velocity bounds with respect to $c$ in the vector case.

\begin{open}

Is it true that the constant $K(A,c)$ of section~\ref{sec:vector} is decreasing with respect to $c$?

\end{open}

Concerning optimal estimates, it could be interesting to reduce the gap between the upper estimates of Theorem~\ref{thm:KcA-upper} and the lower estimates of Theorem~\ref{thm:KcA-lower}. The question arises in both finite and infinite dimensional frameworks. In the finite dimensional case we can state the question as follows.

\begin{open}

Let $d\geq 2$ be an integer. Determine the supremum of $c\cdot K(c,A)$ as $A$ ranges over all positive symmetric $d\times d$ matrices, or at least the
\begin{equation}
\sup\left\{\liminf_{c\to +\infty}c\cdot K(A,c): \mbox{$A$ is a $d\times d$ positive symmetric matrix}\right\}.
\label{open:sup}
\end{equation}

\end{open}

From the results of section~\ref{sec:vector} we know that (\ref{open:sup}) is at least $\sqrt{d+3}$ and at most $2\sqrt{d}$. Just for completeness, we remind that the infimum of $c\cdot K(c,A)$ as $A$ ranges over all positive symmetric $d\times d$ matrices is $4/\pi$ because (\ref{th:lower-basic}) is optimal. 

Finally, it could be interesting to extend Theorem~\ref{thm:KcA-upper} to general dissipation operators as in equation (\ref{eqn:PDE-AB}). The techniques of this paper could probably handle the case where $B$ is of the form $cA^{\alpha}$, or more generally the case where $B$ commutes with $A$. The most delicate point in the proof is when we decomposed the space in (\ref{H-decomp}). In the general case there is no guarantee that the subspaces are both $A$-invariant and $B$-invariant, and this makes the problem more challenging.

\begin{open}

Provide estimates from above for the constants involved in optimal bounds for the velocity of solutions to (\ref{eqn:PDE-AB}) under the assumptions of Theorem~\ref{thmbibl:alain-PDE}.

\end{open}

%\clearpage

\subsubsection*{\centering Acknowledgments}

The first and third authors are members of the \selectlanguage{italian}``Gruppo Nazionale per l'Analisi Matematica, la Probabilità e le loro Applicazioni'' (GNAMPA) of the ``Istituto Nazionale di Alta Matematica'' (INdAM). 

\selectlanguage{english}

%\bibliographystyle{../../../BibTeX/MaxNew}
%\bibliography{../../../BibTeX/Loud}

\begin{thebibliography}{10}
\providecommand{\url}[1]{\texttt{#1}}
\providecommand{\urlprefix}{URL }
\providecommand{\selectlanguage}[1]{\relax}
\providecommand{\eprint}[2][]{\url{#2}}

\bibitem{2013-JFA-AloHar}
\textsc{F.~Aloui}, \textsc{A.~Haraux}.
\newblock Sharp ultimate bounds of solutions to a class of second order linear
  evolution equations with bounded forcing term.
\newblock \emph{J. Funct. Anal.} \textbf{265} (2013), no.~10, 2204--2225.

\bibitem{amerio-prouse}
\textsc{L.~Amerio}, \textsc{G.~Prouse}.
\newblock Uniqueness and almost-periodicity theorems for a non linear wave
  equation.
\newblock \emph{Atti Accad. Naz. Lincei Rend. Cl. Sci. Fis. Mat. Nat. (8)}
  \textbf{46} (1969), 1--8.

\bibitem{Gazzola2}
\textsc{J.~Chu}, \textsc{M.~Garrione}, \textsc{F.~Gazzola}.
\newblock Stability analysis in some strongly prestressed rectangular plates.
\newblock \emph{Evol. Equ. Control Theory} \textbf{9} (2020), no.~1, 275--299.

\bibitem{2009-JMPA-FitHar}
\textsc{C.~Fitouri}, \textsc{A.~Haraux}.
\newblock Sharp estimates of bounded solutions to some semilinear second order
  dissipative equations.
\newblock \emph{J. Math. Pures Appl. (9)} \textbf{92} (2009), no.~3, 313--321.

\bibitem{2013-DCDS-FitHar}
\textsc{C.~Fitouri}, \textsc{A.~Haraux}.
\newblock Boundedness and stability for the damped and forced single well
  {D}uffing equation.
\newblock \emph{Discrete Contin. Dyn. Syst.} \textbf{33} (2013), no.~1,
  211--223.

\bibitem{Garrione-Gazzola}
\textsc{M.~Garrione}, \textsc{F.~Gazzola}.
\newblock \emph{Nonlinear equations for beams and degenerate plates with
  piers}.
\newblock SpringerBriefs in Applied Sciences and Technology. Springer, Cham,
  [2019] \copyright 2019.
\newblock PoliMI SpringerBriefs.

\bibitem{Gazzola1}
\textsc{F.~Gazzola}.
\newblock \emph{Mathematical models for suspension bridges}, \emph{MS\&A.
  Modeling, Simulation and Applications}, volume~15.
\newblock Springer, Cham, 2015.
\newblock Nonlinear structural instability.

\bibitem{giraudo:tesi}
\textsc{C.~Giraudo}.
\newblock \emph{Optimal ultimate bound for linear second order dissipative
  equations}.
\newblock Bachelor thesis, University of Pisa, 2019.

\bibitem{alain:book-LNM}
\textsc{A.~Haraux}.
\newblock \emph{Nonlinear evolution equations---global behavior of solutions},
  \emph{Lecture Notes in Mathematics}, volume 841.
\newblock Springer-Verlag, Berlin-New York, 1981.

\bibitem{1998-MACo-Haraux}
\textsc{A.~Haraux}.
\newblock Uniform decay and {L}agrange stability for linear contraction
  semi-groups.
\newblock \emph{Mat. Apl. Comput.} \textbf{7} (1988), no.~3, 143--154.

\bibitem{2005-AccXL-Haraux}
\textsc{A.~Haraux}.
\newblock On the double well {D}uffing equation with a small bounded forcing
  term.
\newblock \emph{Rend. Accad. Naz. Sci. XL Mem. Mat. Appl. (5)} \textbf{29}
  (2005), 207--230.

\bibitem{Bren-Kovac}
\textsc{I.~Kovacic}, \textsc{M.~J. Brennan}, editors.
\newblock \emph{The {D}uffing equation}.
\newblock John Wiley \& Sons, Ltd., Chichester, 2011.
\newblock Nonlinear oscillators and their behaviour.

\bibitem{levitan-zhikov}
\textsc{B.~M. Levitan}, \textsc{V.~V. Zhikov}.
\newblock \emph{Almost periodic functions and differential equations}.
\newblock Cambridge University Press, Cambridge-New York, 1982.
\newblock Translated from the Russian by L. W. Longdon.

\bibitem{loud}
\textsc{W.~S. Loud}.
\newblock Boundedness and convergence of solutions of {$x''+cx'+g(x)=e(t)$}.
\newblock \emph{Duke Math. J.} \textbf{24} (1957), 63--72.

\bibitem{reed-simon}
\textsc{M.~Reed}, \textsc{B.~Simon}.
\newblock \emph{Methods of modern mathematical physics. {I}}.
\newblock Academic Press, Inc. [Harcourt Brace Jovanovich, Publishers], New
  York, second edition, 1980.
\newblock Functional analysis.

\end{thebibliography}

%\label{NumeroPagine}

\end{document}